\documentclass[a4paper,12pt]{amsart}
\usepackage[english]{babel}

\usepackage{amsmath}
\usepackage{amsthm}
\usepackage{amssymb}
\usepackage{amsfonts}
\usepackage{mathtools}
\usepackage{extarrows}
\usepackage{bbm}

\usepackage[
    a4paper,
    left=3cm,
    right=3cm,
    top=2.5cm,
    bottom=3cm
]{geometry}

\usepackage[shortlabels]{enumitem}
\usepackage{multicol}
\usepackage{pdflscape}
\usepackage[normalem]{ulem}
\usepackage{marginnote}
\usepackage{comment}
\usepackage{cancel}

\usepackage[dvipsnames]{xcolor}
\usepackage{graphicx}
\usepackage[export]{adjustbox}
\usepackage{subcaption}

\usepackage[all]{xy}

\usepackage{tikz}

\usetikzlibrary{
    arrows,
    arrows.meta,
    automata,
    backgrounds,
    calc,
    chains,
    decorations.markings,
    fit,
    matrix,
    mindmap,
    patterns,
    positioning,
    scopes,
    shapes,
    shapes.geometric
}

\usepackage{thmtools}
\usepackage{todonotes}

\setlist[enumerate]{
    itemsep=0.2em,
    topsep=0.25em
}

\makeatletter

\renewcommand{\@dotsep}{4.5}

\renewcommand{\l@section}{%
    \@tocline{1}{0.4em}{0em}{2.5em}{\bfseries}%
}

\renewcommand{\subsection}{%
  \@startsection{subsection}{2}%
  {\z@}%
  {.5\linespacing\@plus.7\linespacing}%
  {.4\linespacing}%
  {\normalfont\bfseries}%
}

\renewcommand{\l@subsubsection}{%
    \@tocline{3}{0.1em}{4.9em}{5.8em}{}%
}

\renewcommand{\tocsection}[3]{%
    \indentlabel{%
        \@ifnotempty{#2}{%
            \bfseries\ignorespaces#1 #2\quad
        }%
    }%
    \bfseries#3%
}

\renewcommand{\tocsubsection}[3]{%
    \indentlabel{%
        \@ifnotempty{#2}{%
            \ignorespaces#1 #2\quad
        }%
    }%
    #3%
}

\def\@tocline#1#2#3#4#5#6#7{%
    \relax
    \ifnum#1>\c@tocdepth
    \else
        \par
        \addpenalty\@secpenalty
        \addvspace{#2}%
        \begingroup
            \hyphenpenalty\@M
            \@ifempty{#4}{%
                \@tempdima
                \csname r@tocindent\number#1\endcsname
                \relax
            }{%
                \@tempdima#4\relax
            }%
            \parindent\z@
            \leftskip#3\relax
            \advance\leftskip\@tempdima\relax
            \rightskip\@pnumwidth plus1em
            \parfillskip-\@pnumwidth
            #5%
            \leavevmode
            \hskip-\@tempdima
            {#6}%
            \nobreak
            \leaders
            \hbox{%
                $\m@th
                \mkern\@dotsep mu
                \hbox{.}%
                \mkern\@dotsep mu$%
            }%
            \hfill
            \nobreak
            \hbox to\@pnumwidth{%
                \@tocpagenum{%
                    \ifnum#1=1
                        \bfseries
                    \fi
                    #7%
                }%
            }%
            \par
            \nobreak
        \endgroup
    \fi
}

\AtBeginDocument{%
    \expandafter\renewcommand
    \csname r@tocindent0\endcsname{0pt}%
}

\makeatother

\newtheorem{theorem}{Theorem}[section]
\newtheorem{introthm}{Theorem}

\newtheorem{lemma}[theorem]{Lemma}
\newtheorem{fact}[theorem]{Fact}

\theoremstyle{definition}

\theoremstyle{remark}

\newtheorem{remark}[theorem]{Remark}

\numberwithin{equation}{section}

\newcommand{\R}{\mathbb{R}}

\newcommand{\N}{\mathbb{N}}

\newcommand{\aco}{\operatorname{aco}}
\newcommand{\co}{\operatorname{co}}
\newcommand{\Exp}{\operatorname{exp}}
\newcommand{\Ext}{\operatorname{ext}}

\newcommand{\eps}{\varepsilon}

\newcommand{\polten}[1]{%
    \widetilde{\otimes}_{\pi_s}^{#1, \,s}%
}
\newcommand{\multiten}[1]{%
    \widetilde{\otimes}_{\pi}^{\,#1}%
}

\DeclareMathOperator{\id}{Id}
\DeclareMathOperator{\supp}{supp}

\renewcommand{\geq}{\geqslant}
\renewcommand{\leq}{\leqslant}

\newcommand{\pten}{\ensuremath{\widetilde{\otimes}_\pi}}

\definecolor{teal}{RGB}{0,128,128}
\definecolor{coral}{RGB}{220,90,70}

\usepackage{hyperref}
	\hypersetup{breaklinks=true,colorlinks=true,
linkcolor=coral,citecolor=coral,
urlcolor=MidnightBlue}

\usepackage{bookmark}

\begin{document}

\title[On Exposed (Symmetric) Tensors]{On Exposed (Symmetric) Tensors}
\dedicatory{ }

\author[S.~Dantas]{Sheldon Dantas}
\address[S.~Dantas]{Czech Technical University in Prague, FEE, Department of Mathematics, Technick\'a 2, 16627, Prague 6, Czech Republic\\
\href{https://orcid.org/0000-0001-8117-3760}{ORCID: \texttt{0000-0001-8117-3760}}}
\email{\texttt{sheldon.dantas@fel.cvut.cz}}
\urladdr{www.sheldondantas.com}

\author[M.~Mazzitelli]{Martin Mazzitelli}
\address[M.~Mazzitelli]{Instituto Balseiro, CNEA -- Universidad Nacional de Cuyo, CONICET, Argentina.}
\email{\texttt{martin.mazzitelli@ib.edu.ar}}

\author[J.~T.~Rodr\'iguez]{Jorge Tom\'as Rodr\'iguez}
\address[J.~T.~Rodr\'iguez]{NuCoMPA, Facultad de Cs. Exactas, Universidad Nacional del Centro de la Provincia de Buenos Aires, (7000) Tandil, Argentina and CONICET\\
\href{https://orcid.org/0000-0003-4693-2498}{ORCID: \texttt{0000-0003-4693-2498}}}
\email{\texttt{jtrodriguez@nucompa.exa.unicen.edu.ar}}

\begin{abstract}  We provide an example of a projective tensor product whose unit ball contains an exposed tensor which is not elementary. We then transfer the construction to the symmetric projective tensor product and obtain an analogous result in that setting. Finally, we extend both constructions to projective tensor products with an arbitrary number of factors and to symmetric projective tensor products of every degree.
\end{abstract}

\subjclass[2020]{Primary 46B20, 46G25; Secondary 46B28}
\keywords{Exposed points; extreme points; projective tensor products; symmetric tensor products}

\maketitle
\tableofcontents
 
\section{Introduction}

The study of extreme and exposed points of unit balls is a classical topic in Banach space theory. In the context of tensor products, a natural question is whether extremal properties force a tensor to be elementary. Several known results for projective tensor products point in this direction. Ruess and Stegall in \cite{RuessStegall1982} studied the extremal structure of duals of spaces of operators and, as a consequence of their results, one obtains that, under suitable Radon-Nikodým and approximation property assumptions, 
\begin{equation*}
    \Ext(B_{X^* \pten Y^*}) = \Ext(B_{X^*}) \otimes \Ext(B_{Y^*}).
\end{equation*} 
For stronger notions of extremality, Ruess and Stegall proved that every strongly exposed point of the unit ball of a projective tensor product is elementary \cite{RuessStegallExposed}. Werner obtained the analogous conclusion for denting points, in fact in the more general setting of closed bounded absolutely convex sets \cite{WernerDenting}. More recently, Garc\'{\i}a-Lirola, Grelier, Mart\'{\i}nez-Cervantes and Rueda Zoca studied preserved extreme points and weakly strongly exposed points in projective tensor products \cite{GarciaLirolaGrelierMartinezRueda}. In particular, they proved that, whenever the compact operators from $X$ into $Y^*$ separate the points of $X \pten Y$, every non-zero preserved extreme point of $B_{X \pten Y}$ is an elementary tensor. In the same paper, the authors explicitly asked whether every extreme point of $B_{X \pten Y}$ must be elementary. In a different direction, Aliaga et al. proved that every extreme point of a projective tensor product which is integral projective norm-attaining and witnessed by a Radon measure must be an elementary tensor \cite{AliagaDantasGuerreroJungRoldan}. Thus, although several stronger forms of extremality are known to force the tensors to be elementary, the corresponding question for arbitrary extreme points remained open.

A similar phenomenon appears in the symmetric setting. Ryan and Turett proved that, for every finite-dimensional real Banach space $X$, the exposed and extreme points of
$B_{\polten{N}{X}}$ coincide. Moreover, every such point is of the form $\pm x^{\otimes N}$ for some $x\in S_X$ \cite{RyanTurett}. The geometry of symmetric tensor products and spaces of homogeneous polynomials has also been studied by Boyd and Ryan \cite{BoydRyan}. Closely related results are known for integral homogeneous polynomials. Boyd and Lassalle obtained descriptions of their extreme and exposed points under suitable assumptions \cite{BoydLassalle}, while Dimant, Galicer and Garc\'{\i}a proved that, for every real Banach space $X$, the extreme points of the unit ball of the space of integral $N$-homogeneous polynomials are exactly tensors of the form $\pm \phi^N$ with $\phi \in S_{X^*}$ \cite{DimantGalicerGarcia}. Although these results concern a related dual setting, they give further evidence for the prevalence of elementary tensors and pure powers in extremal problems.

It is worth mentioning that there is also a useful analogy with Lipschitz-free spaces, where elementary molecules play a role similar to elementary tensors. The question of whether every extreme point of the unit ball of a Lipschitz-free space must be an elementary molecule was open for a long time and was recently solved affirmatively by Aliaga, Perneck\'a and Smith: for every complete metric space $M$, every extreme point of $B_{\mathcal{F}(M)}$ is an elementary molecule \cite{AliagaPerneckaSmith}.

The purpose of the present paper is to show that the analogous statement for tensor products is false in full generality. We construct a two-dimensional real Banach space $X$, an equivalent renorming $Y$ of $c_0$ and a tensor $u \in X \pten Y$ which is  rank two and is an exposed point of $B_{X \pten Y}$. We then transfer this construction to the symmetric setting and obtain a real Banach space $Z$ such that $B_{\polten{2}{Z}}$ contains an exposed point which is not of the form $\pm z^{\otimes 2}$. Finally, we extend these two constructions to projective tensor products with an arbitrary number of factors and to symmetric projective tensor products of every degree.

\section{Notation}

Throughout this manuscript, all the Banach spaces will be considered over the {\bf real} numbers. Let $X$ be a Banach space. We will denote by $B_X$ its unit ball and by $X^*$ its topological dual space. If $A \subseteq X$ is a subset of $X$, then we denote by $\co(A)$ and by $\aco(A)$ its convex hull and absolutely convex hull, respectively. Suppose $A \subseteq X$ is convex. A point $x \in A$ is said to be an extreme point of $A$ if, whenever $x = \frac{y+z}{2}$ with $y,z \in A$, we have that $y = z = x$. The set of all extreme points of $A$ is denoted by $\Ext(A)$. A point $ x \in A$ is said to be an exposed point of $A$ if there exists $f \in X^*$ such that $f(x) > f(y)$ for every $y \in A \setminus \{x\}$. In this case, we say that $f$ exposes $x$. The set of all exposed points of $A$ is denoted by $\Exp(A)$. A nonempty subset $F \subseteq A$ is said to be an exposed face of $A$ if there exists $f \in X^*$ such that $F= \{ x\in A: f(x) = \sup_{y \in A} f(y)\}$. In this case, we say that $f$ exposes the face $F$. Clearly, $\Exp(A) \subseteq \Ext(A)$ and an exposed point is precisely an exposed face consisting of a single point.

Let $X$ and $Y$ be Banach spaces. We denote by $X \otimes Y$ their algebraic tensor product. The projective norm on $X \otimes Y$ is defined by
\begin{equation*}
    \|u\|_{\pi} := \inf \left\{ \sum_{j=1}^n \|x_j\| \|y_j\| : u = \sum_{j=1}^n x_j \otimes y_j, \ n \in \N \right\}.
\end{equation*}
The completion of $X \otimes Y$ with respect to $\|\cdot\|_{\pi}$ is denoted by $X \pten Y$ and is called the projective tensor product of $X$ and $Y$. We recall that $(X \pten Y)^*$ is isometrically identified with $\mathcal{B}(X \times Y)$, the Banach space of all bounded bilinear forms $B: X \times Y \rightarrow \R$ through the duality $\langle B, x \otimes y \rangle = B(x,y)$ for every $x \in X$ and $y \in Y$. More generally, given Banach spaces $X_1,\ldots,X_N$, we denote by
\(\multiten{N} X_i
\)
the completion of the algebraic tensor product with respect to the projective norm
\[
\|u\|_\pi
:=
\inf\left\{
\sum_{j=1}^n
\|x_{1,j}\|\cdots\|x_{N,j}\| :
u=
\sum_{j=1}^n
x_{1,j}\otimes\cdots\otimes x_{N,j},
\ n\in\mathbb N
\right\}.
\]

For $N \in \N$, we denote by $\bigotimes^{N,s} X$ the algebraic symmetric tensor product of $X$, that is, the linear span of the elements $x^{\otimes N} = x \otimes \cdots \otimes x$ with $x \in X$. The symmetric projective norm on $\bigotimes^{N,s} X$ is given by
\begin{equation*}
    \|u\|_{\pi_s} := \inf \left\{ \sum_{j=1}^n |\lambda_j| \|x_j\|^N : u = \sum_{j=1}^n \lambda_j x_j^{\otimes N}, \ n \in \N \right\}.
\end{equation*}
Its completion is called the symmetric projective tensor product. For $w_1,\ldots,w_N   \in X$, we write 
\begin{equation} \label{sym}
    w_1 \vee \cdots \vee w_N := \frac{1}{N!}\sum_{\sigma \in S_N} w_{\sigma_1}\otimes\cdots\otimes w_{\sigma_N},
\end{equation}
where $S_N$ is the permutation group.

We denote by $\mathcal{L}_s(^N X)$ the space of symmetric $N$-linear forms and by $\mathcal{P}(^N X)$ the Banach space of all continuous $N$-homogeneous polynomials on $X$. Every $P \in \mathcal{P}(^N X)$ has a unique associated symmetric $N$-linear form $\check{P} \in \mathcal{L}_s(^N X)$ satisfying $P(x) = \check{P}(x,\ldots,x)$ for every $x \in X$. The dual space $(\polten{N}{X})^*$ is isometrically identified with $\mathcal{P}(^N X)$ through $\langle P, x^{\otimes N} \rangle = P(x)$ for every $x\in X$ and $\langle P, w_1 \vee \cdots \vee w_N \rangle = \check{P}(w_1,\ldots, w_N)$ for every $w_1,\ldots,w_N   \in X$.

\section{The projective tensor product }\label{sec:full tensor}

We start with the projective tensor product. The main goal of this section is to prove the existence of an equivalent renorming of $c_0$ such that the unit ball of $X \pten c_0$ contains a non-elementary exposed point, where $X$ is some two-dimensional space which we will specify later. More specifically, we will prove the following result.

\begin{introthm} \label{theorem:full-projective-main} There exist a two-dimensional real Banach space $X$, a renorming of $c_0$, denoted by $Y$, and  $u \in X \pten Y$ such that $u$ is a non-elementary exposed point of the unit ball $B_{X \pten Y}$.
\end{introthm}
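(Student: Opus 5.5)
We use the dualities already recorded: $X\pten Y$ with $X$ two-dimensional is identified with $Y\oplus Y$, normed by
\[
\|u\|_\pi=\inf\Big\{\sum_j\|x_j\|\|y_j\|\Big\},
\]
and $(X\pten Y)^*=\mathcal B(X\times Y)\cong \mathcal L(X,Y^*)$. The unit ball $B_{X\pten Y}$ is the closed absolutely convex hull of $B_X\otimes B_Y$. An exposed point of it is obtained as follows: find a bilinear form $B$ of norm one such that the face $\{v: B(v)=1\}$ contains exactly one point $u$, and $u$ is not of the form $x\otimes y$.

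The plan is to take $X=\R^2$ with a polygonal norm, for instance the $\ell_1$-norm (or a hexagonal norm), so that $\Ext(B_X)$ is a finite set $\{\pm e_1,\pm e_2\}$ or similar. Fix the candidate $u=\tfrac12(e_1\otimes a+e_2\otimes b)$ with $a,b\in S_Y$. With $X=\ell_1^2$ we have $X\pten Y=\ell_1^2(Y)$ isometrically, so $\|u\|=\tfrac12(\|a\|+\|b\|)=1$. The face exposed by $B\leftrightarrow(\phi,\psi)\in Y^*\times Y^*$, where $\|B\|=\max(\|\phi\|,\|\psi\|)$, is
\[
\Big\{(y_1,y_2): \sum\|y_i\|\le1,\ \phi(y_1)+\psi(y_2)=1\Big\}.
\]
This is a genuine $\ell_1$-sum face, namely the convex hull of $(F_\phi,0)$ and $(0,F_\psi)$, and it is never a singleton unless both faces are points and only one side is involved. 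So pure $\ell_1$ fails: in it only elementary tensors $e_i\otimes y$ can be exposed. The $\ell_1$ structure, however, shows what to change. The face of $B_{X\pten Y}$ exposed by $B$ is the closed convex hull of the elementary tensors $x\otimes y$ with $B(x,y)=1$, i.e. with $\|x\|=\|y\|=1$, $x$ extreme, $y\in F_{T x}$ where $T:X\to Y^*$ is the associated operator. To make $u$ the unique point of this face we need:
(i) the set of norming pairs is exactly $\{(x_1,a),(x_2,b)\}$ up to sign;
(ii) $u$ is the unique point in the closed convex hull of the norming elementary tensors, which fails for a segment;
(iii) the face is a singleton.

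The condition in (ii) cannot hold for a segment $[x_1\otimes a,x_2\otimes b]$. Hence the renorming of $Y$ must make $x_1\otimes a$ and $x_2\otimes b$ not both in the ball at norm one along that segment. The real mechanism must therefore be that the face contains many elementary tensors $x\otimes y_x$ for a continuum of $x$, whose convex hull collapses to a point only in the limit. This is why $c_0$ (infinite-dimensional, non-reflexive) is needed: the sup of $B$ over $B_X\otimes B_Y$ is approached but the face of the closed hull should be $\{u\}$. Concretely, I would choose a sequence of norm-one $x_n\in X$ converging to two directions, and vectors $y_n\in Y$, so that
\[
\sum_n \lambda_n\, x_n\otimes y_n = u
\]
with $\|x_n\|\|y_n\|=1$ and $\sum\lambda_n=1$. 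The norm of $Y=c_0$ is then renormed, as $\|y\|=\max(\|y\|_\infty,\ \sup_n |g_n(y)|)$ with suitable functionals $g_n$, so that $B$ attains its norm on the ball only at $u$. This works because the individual $x_n\otimes y_n$ have $B$-value strictly less than one, and only their combination reaches the value one. Non-elementarity then follows because $u$ has rank two, which we check by showing that its associated operator $X^*\to Y$ has two-dimensional range.

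The main obstacle is the verification that the exposed face is exactly $\{u\}$: any $v$ in the ball with $B(v)=1$ must equal $u$. I would try to show $\|u\|_\pi=1$ via the duality $B(u)=1$, $\|B\|\le1$. Next I would take arbitrary representations $v=\sum\mu_j x_j\otimes y_j$ with $\sum|\mu_j|\le1+\eps$ and $B(v)=1$. From this I would derive that almost all mass lies on pairs with $B(x_j,y_j)\approx1$. Using the strict convexity built into the renorming, such pairs are forced close to prescribed ones, which pins down the coordinates of $v$ in $c_0$ one at a time.
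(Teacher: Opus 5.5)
\textbf{There is a genuine gap: the mechanism you settle on cannot produce the example.}

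Suppose $u=\sum_n\lambda_n x_n\otimes y_n$ with $\lambda_n\geq 0$, $\sum_n\lambda_n=1$, $\|x_n\|\|y_n\|=1$, and $\|B\|=1$. Then $B(u)=\sum_n\lambda_n B(x_n,y_n)$.
\begin{itemize}
\item If every $B(x_n,y_n)<1$, then $B(u)<1$. So the individual terms cannot stay below $1$ while their combination reaches $1$.
\item If $B(u)=1$, then every $x_n\otimes y_n$ with $\lambda_n>0$ lies in the exposed face, hence equals $u$. So $u$ would be elementary.
\end{itemize}
Hence a non-elementary exposed point of $B_{X\pten Y}$ admits no norm-attaining representation inside $X\otimes Y$, and no elementary tensor of $B_{X\pten Y}$ lies in its face. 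Two of your claims fail for the same reason. The claim that the exposed face is the closed convex hull of the norming elementary tensors $x\otimes y$ with $y\in Y$ is false here: in the paper's example that set is empty while the face is $\{u\}$. Requirement (i) is incompatible with $u$ being non-elementary. Beyond this, you never actually specify $a$, $b$, the functionals $g_n$, or the form $B$. So your last paragraph has nothing concrete to verify, and the appeal to ``strict convexity built into the renorming'' has no content.

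\textbf{The missing idea is that the norming elementary tensors must live in the bidual.} The paper's construction runs as follows.
\begin{itemize}
\item Take $x_1=e_1$, $x_2=e_2$, $x_3=-e_1-e_2$ with $B_X=\aco\{x_1,x_2,x_3\}$ (the hexagon).
\item Take $z_1,z_2,z_3\in\ell_\infty\setminus c_0$ whose tails are all equal to $1$, so that $z_i-z_j\in c_0$.
\item Since $x_1+x_2+x_3=0$, the tails cancel exactly at the barycenter: $\frac13\sum_j x_j\otimes z_j=\frac12(x_1\otimes e_1+x_2\otimes e_2)$. This is the only point of $\co\{x_j\otimes z_j\}$ lying in $X\otimes c_0$.
\item With only two directions, as in your $\ell_1^2$ attempt, no point of the corresponding segment lies in $X\otimes c_0$ at all. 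So the third extreme direction is essential, not cosmetic.
\item Renorm $c_0$ so that $f_1=e_1^*$, $f_2=e_2^*$, $f_3=-e_1^*-e_2^*$ expose $z_1,z_2,z_3$ in $B_{Y^{**}}$.
\item The hexagonal norm is a quotient of $\ell_1^3$. Hence every $v\in X\pten Y^{**}$ has an optimal representation $v=\sum_j x_j\otimes w_j$ with $\|v\|_\pi=\sum_j\|w_j\|$.
\item From this, $\varphi(x\otimes z)=z(Ax)$ with $Ax_j=f_j$ exposes exactly the triangle $\co\{x_j\otimes z_j\}$ in $B_{X\pten Y^{**}}$.
\item Restricting $\varphi$ to $X\pten Y$ then exposes $u$, with no approximation argument needed.
\end{itemize}
Your candidate $u$, the hexagonal norm, and your remark that the supremum of $B$ over $B_X\otimes B_Y$ must not be attained all point in this direction. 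However, the proposal needs this bidual-face mechanism to become a proof.
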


As we have mentioned before, Ruess and Stegall characterized the extreme points of the dual ball of suitable spaces of compact weak$^*$-to-weak continuous operators (see \cite[Theorem~1.1]{RuessStegall1982}). Combined with the classical tensorial identification (see, for instance, \cite[Theorem~5.33]{Ryan}) this yields that, whenever $E^*$ or $F^*$ has the Radon-Nikodým property and $E^*$ or $F^*$ has the approximation property, then 
\begin{equation*}
    \Ext(B_{E^* \pten F^*}) = \Ext(B_{E^*}) \otimes \Ext(B_{F^*}).
\end{equation*}
In particular, every extreme point of $B_{\R^2 \pten \ell_{\infty}}$ must be an elementary tensor. This means that, if we would like to find a non-elementary extreme point in a tensor product of the form $\R^2 \pten c_0$, then such a point needs to fail to be extreme in the corresponding larger tensor product $\R^2 \pten \ell_{\infty}$.

Based on this, the idea of our construction is as follows. At first, we consider only algebraic tensors and no norms involved. In this scenario, we construct three tensors $u_1, u_2, u_3 \in \R^2 \pten \ell_{\infty}$ such that their convex hull, denoted here by $\mathbf{S} := \co \{u_1, u_2, u_3\}$, is such that $\mathbf{S} \cap (\R^2 \pten c_0) = \{u\}$, where $u$ is a non-elementary (in fact, rank 2) tensor. Having this in mind, we then introduce suitable norms on $\R^2$ and $c_0$ so that $\mathbf{S}$ becomes an exposed face of the unit ball of the corresponding tensor product $\R^2 \pten \ell_{\infty}$, which is exposed by a functional $\varphi$. Since the intersection of $\mathbf{S}$ with $\R^2 \pten c_0$ consists only of $u$, the restriction of the same functional $\varphi$ to $\R^2 \pten c_0$ exposes $u$.

Now we are ready to start. As it requires several steps, we split our construction into different lemmas and facts, and provide a proof for Theorem \ref{theorem:full-projective-main} at the end of the section. Let us notice that some of the results presented in this section will be used in Section \ref{section:symmetric}, where we will be considering the symmetric case.

Consider $x_1, x_2, x_3 \in \R^2$ such that $x_1 + x_2 + x_3 = 0$ and assume that $x_1$ and $x_2$ are linearly independent. On the other hand, in $\ell_{\infty}$, we consider the vectors
\begin{equation*}
    z_1 := \left( 1, - \frac{1}{2}, 1, 1, \ldots \right), \ z_2:= \left( - \frac{1}{2}, 1, 1, 1, \ldots, \right) \ \mbox{and} \ z_3 := \left( -\frac{1}{2}, -\frac{1}{2}, 1, 1, \ldots \right).
\end{equation*}
Notice that, whenever $i \not= j$, $z_i - z_j$ is an element of $c_0$ as their tails are composed of 1's. The reader should take into account that these elements will be used throughout the entire paper and it is important to remember their definitions in order to avoid confusion. In $\R^2 \otimes \ell_{\infty}$, we consider the elements 
\begin{equation*}
    u_j := x_j \otimes z_j \ \mbox{for} \ j=1,2,3.
\end{equation*}
Now, we define the element
\begin{equation} \label{vector-u}
u:= \frac{u_1 + u_2 + u_3}{3} \in \R^2 \otimes \ell_{\infty}.
\end{equation}
Let us notice that, since $x_3 = -x_1 - x_2$, it turns out that $u = \frac{1}{2} x_1 \otimes e_1 + \frac{1}{2} x_2 \otimes e_2$, that is, $u$ is a rank 2 tensor as $x_1$ and $x_2$ are linearly independent. We highlight the first property of these elements.

\begin{fact} \label{fact1} Set $\mathbf{S} := \co \{u_1, u_2, u_3\}$. Then,  $\mathbf{S} \cap (\R^2 \otimes c_0) = \{ u \}$.
\end{fact}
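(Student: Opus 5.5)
Take an arbitrary point $v=\lambda_1u_1+\lambda_2u_2+\lambda_3u_3\in\mathbf S$ with $\lambda_j\geq 0$ and $\lambda_1+\lambda_2+\lambda_3=1$. I will find exactly when $v$ lies in $\R^2\otimes c_0$. The approach is to view $\R^2\otimes\ell_\infty$ as pairs of sequences. Fix a basis of $\R^2$, and identify $w=\sum a\otimes z$ with the map $e_k^*\mapsto \sum a\, z(k)$, i.e.\ the sequence $(\sum a\,z(k))_k$ of vectors in $\R^2$. Under this identification, $\R^2\otimes c_0$ is exactly the set of tensors whose associated $\R^2$-valued sequence tends to $0$. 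This holds because $\R^2\otimes c_0\cong c_0\times c_0$ sits inside $\R^2\otimes\ell_\infty\cong\ell_\infty\times\ell_\infty$ coordinatewise.

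First compute the tail. For $k\geq 3$ all $z_j(k)=1$, so the $k$-th entry of $v$ is $\lambda_1x_1+\lambda_2x_2+\lambda_3x_3$, independently of $k$. Hence $v\in\R^2\otimes c_0$ if and only if $\lambda_1x_1+\lambda_2x_2+\lambda_3x_3=0$. Substitute $x_3=-x_1-x_2$ to get $(\lambda_1-\lambda_3)x_1+(\lambda_2-\lambda_3)x_2=0$. Linear independence of $x_1,x_2$ then forces $\lambda_1=\lambda_2=\lambda_3$. Together with $\sum\lambda_j=1$ this gives $\lambda_j=1/3$, so $v=u$.

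Conversely, check that $u\in\R^2\otimes c_0$. This follows from the computation already noted in the text, $u=\tfrac12x_1\otimes e_1+\tfrac12x_2\otimes e_2$. Alternatively, the tail entries of $u$ equal $\tfrac13(x_1+x_2+x_3)=0$.

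The only point needing care is the justification that membership in $\R^2\otimes c_0$ is decided by the vanishing of the constant tail. I would argue this directly: write $v=x_1\otimes y_1+x_2\otimes y_2$ with $y_1,y_2\in\ell_\infty$, using the basis $\{x_1,x_2\}$. Then $v\in\R^2\otimes c_0$ if and only if $y_1,y_2\in c_0$, by uniqueness of the coefficient representation over a basis. Expanding $x_3=-x_1-x_2$ gives $y_1=\lambda_1z_1-\lambda_3z_3$ and $y_2=\lambda_2z_2-\lambda_3z_3$, whose tails are the constants $\lambda_1-\lambda_3$ and $\lambda_2-\lambda_3$. This makes the argument fully elementary.
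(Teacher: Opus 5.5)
Your proof is correct and is essentially the paper's argument. You rewrite $v$ in the basis $\{x_1,x_2\}$ as $x_1\otimes(\lambda_1z_1-\lambda_3z_3)+x_2\otimes(\lambda_2z_2-\lambda_3z_3)$, use linear independence to force both coefficients into $c_0$, and read off $\lambda_1=\lambda_2=\lambda_3=1/3$ from the constant tails; you also explicitly check the reverse inclusion $u\in\R^2\otimes c_0$, which the paper leaves implicit via $u=\tfrac12 x_1\otimes e_1+\tfrac12 x_2\otimes e_2$.
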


\begin{proof} Let us take $v \in \mathbf{S} \cap (\R^2 \otimes c_0)$. Then, we can write $v = \sum_{i=1}^3 t_i u_i$ with $t_i \geq 0$ for every $i=1,2,3$ and $\sum_{i=1}^3 t_i = 1$. Since $x_3 = -x_1 - x_2$, then
\begin{equation*}
v = x_1 \otimes (t_1 z_1 - t_3z_3) + x_2 \otimes (t_2 z_2 - t_3 z_3).
\end{equation*} 
Since $x_1$ and $x_2$ are linearly independent, we must have that both elements $t_1 z_1 - t_3 z_3$ and $t_2 z_2 - t_3 z_3$ are elements of $c_0$. From the way the $z_j$'s are defined, we have that $t_1 = t_3$ and $t_2 = t_3$. This shows that $t_1 = t_2 = t_3 = 1/3$.
\end{proof}

Next, we renorm $c_0$ so that each $z_j$ lies in the unit ball of its bidual. Let $P_n: \ell_{\infty} \rightarrow \ell_{\infty}$ denote the projection onto the first $n$ coordinates. Consider a sequence $(r_n) \subseteq \R$ with $0 < r_n < 1$ for every $n \in \N$ and $r_n \uparrow 1$. For every $n \geq 3$, let us define 
\begin{equation*}
    w_{j,n} := r_n P_n(z_j) \ \mbox{for} \ j=1,2,3.
\end{equation*}
Then, $\|w_{j,n}\|_{\infty} = r_n < 1$ for every $n \geq 3$ and $j=1,2,3$. Notice that, for every $\xi = (\xi_k) \in \ell_1$, we have that 
\begin{equation*}
    \langle w_{j,n}, \xi \rangle = r_n \sum_{k=1}^n z_j(k) \xi_k \longrightarrow \sum_{k=1}^{\infty} z_j(k) \xi_k = \langle z_j, \xi \rangle 
\end{equation*}
for every $j=1,2,3$ as $n \rightarrow \infty$. This means that $(w_{j,n})_{n=3}^{\infty}$ converges weak-star to $z_j$ for every $j=1,2,3$. Now, consider $Y$ to be $c_0$ equipped with an equivalent norm defined by the unit ball
\begin{equation*}
    B_Y := \overline{\aco}^{\|\cdot\|_{\infty}} \left( \frac{1}{4} B_{c_0} \cup \{ w_{j,n}: j=1,2,3, n \geq 3 \} \right).
\end{equation*}
Notice that $B_Y$ is the smallest norm-closed convex and balanced subset of $c_0$ containing $\frac{1}{4}B_{c_0}$ and all the vectors $w_{j,n}$. More specifically, the new norm is given by the Minkowski functional of $B_Y$ and clearly 
\begin{equation*}
    \frac{1}{4} B_{c_0} \subseteq B_Y \subseteq B_{c_0}.
\end{equation*}
In particular, $Y^* = \ell_1$ and $Y^{**} = \ell_{\infty}$ as vector spaces. Let us consider the set 
\begin{equation*}
    K:= \frac{1}{4} B_{\ell_{\infty}} \cup \left\{ \pm w_{j,n}: j=1,2,3 \ \mbox{and} \ n \geq 3 \right\} \cup \left\{ \pm z_1, \pm z_2, \pm z_3 \right\}.
\end{equation*}
Since $(w_{j,n})_{n=3}^{\infty}$ converges weak-star to $z_j$ for every $j=1,2,3$ and $B_{c_0}$ is weak-star dense in $B_{\ell_{\infty}}$, then $K \subseteq B_{Y^{**}}$. Since $B_{Y^{**}}$ is convex and weak-star closed, we have that 
\begin{equation*}
    \overline{\co}^{w^*}(K) \subseteq B_{Y^{**}}.
\end{equation*}
On the other hand, $B_Y \subseteq \overline{\co}^{w^*}(K)$ and, by Goldstine's theorem, $B_{Y^{**}} = \overline{B_Y}^{w^*}$. Then, $B_{Y^{**}} \subseteq \overline{\co}^{w^*}(K)$. Therefore, we obtain
\begin{equation} \label{eq1}
    B_{Y^{**}} = \overline{\co}^{w^*}(K).
\end{equation}

In $\R^2$, we choose the points $x_1 := e_1 = (1,0)$, $x_2:= e_2 = (0,1)$ and $x_3 := -e_1 - e_2 = (-1,-1)$. We equip $\R^2$ with the norm whose unit ball is $\aco \{x_1, x_2, x_3\} = \co \{ \pm x_1, \pm x_2, \pm x_3\}$. Let us call this space $X$. The unit ball is therefore the hexagon with vertices $(1, 0)$, $(0,1)$, $(-1,-1)$, $(-1,0)$, $(0,-1)$ and $(1,1)$ (see Figure \ref{fig1} below).

\begin{figure}[ht]
\centering
\begin{tikzpicture}[scale=2]

\fill[gray!15]
    (1,0) --
    (1,1) --
    (0,1) --
    (-1,0) --
    (-1,-1) --
    (0,-1) -- cycle;

\draw[thick]
    (1,0) --
    (1,1) --
    (0,1) --
    (-1,0) --
    (-1,-1) --
    (0,-1) -- cycle;

\draw (-1.4,0) -- (-1,0);
\draw[dotted] (-1,0) -- (1,0);
\draw[->] (1,0) -- (1.5,0);

\draw (0,-1.4) -- (0,-1);
\draw[dotted] (0,-1) -- (0,1);
\draw[->] (0,1) -- (0,1.5);

\fill (1,0) circle (1.2pt);
\fill (1,1) circle (1.2pt);
\fill (0,1) circle (1.2pt);
\fill (-1,0) circle (1.2pt);
\fill (-1,-1) circle (1.2pt);
\fill (0,-1) circle (1.2pt);

\node[below right] at (1,0) {$(1,0)$};
\node[above right] at (1,1) {$(1,1)$};
\node[above left] at (0,1) {$(0,1)$};

\node[above left] at (-1,0) {$(-1,0)$};
\node[below left] at (-1,-1) {$(-1,-1)$};
\node[below right] at (0,-1) {$(0,-1)$};

\node at (0.35,0.25) {$B_X$};

\end{tikzpicture}
\caption{The unit ball
$B_X=\operatorname{aco}\{x_1,x_2,x_3\}$}
\label{fig1}
\end{figure}

Let us notice that this norm comes from the quotient $\ell_1^3 / \langle (1,1,1) \rangle$. This will be convenient for us when proving Fact \ref{fact4} as we will take advantage of the fact that projective tensor norms preserve quotient maps (see, for instance, \cite[Proposition 2.5]{Ryan}). Indeed, let $q: \ell_1^3 \rightarrow \R^2$ be given by $q(e_1):= x_1$, $q(e_2) := x_2$ and $q(e_3):= x_3$. Then, $q(a,b,c) = (a-c, b-c)$ for every $(a,b,c) \in \ell_1^3$ and $q(a,b,c) = 0$ if and only if $a=c$ and $b=c$. This shows that $\ker q = \langle (1,1,1) \rangle$ and then $X$ is isomorphic to $\ell_1^3 / \langle (1,1,1) \rangle$. Since $B_{\ell_1^3} = \aco\{e_1, e_2, e_3\}$, we have that $q(B_{\ell_1^3}) = \aco \{ q(e_1), q(e_2), q(e_3)\} = B_X$. In particular, 
\begin{equation*}
    \|x\|_X = \inf \{ |a| + |b| + |c|: x = a x_1 + b x_2 + c x_3 \}.
\end{equation*}

We need the following lemma.

\begin{lemma} \label{lemma1} Let $Y = (c_0, \|\cdot\|_Y)$ as defined above. For every $n \in \N$, consider the coordinate functionals $e_n^* \in Y^*$. Let us set $f_1 := e_1^*$, $f_2 := e_2^*$ and $f_3:= -e_1^* - e_2^*$. Then, $\|f_j\|_{Y^*} = 1$ for every $j=1,2,3$ and 
\begin{equation} \label{eq2}
    \{ z \in B_{Y^{**}}: z(f_j) = 1 \} = \{z_j\}. 
\end{equation}
\end{lemma}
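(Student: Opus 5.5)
We compute the norms of $f_j$ by duality and then identify the face they cut out of $B_{Y^{**}}$, using the description \eqref{eq1}, $B_{Y^{**}}=\overline{\co}^{w^*}(K)$. Since $f_j\in\ell_1=Y^*$ is weak-star continuous on $Y^{**}=\ell_\infty$, we have
\[
\|f_j\|_{Y^*}=\sup_{z\in B_{Y^{**}}} z(f_j)=\sup_{z\in K} z(f_j).
\]
We evaluate each piece of $K$ in turn.

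On $\frac14 B_{\ell_\infty}$ we get $|z(f_j)|\le \frac14\|f_j\|_1\le \frac12$. For the vectors $z_i$, a direct computation gives $z_j(f_j)=1$. For $i\ne j$ we get:
\begin{itemize}
\item $z_2(f_1)=-\tfrac12$ and $z_3(f_1)=-\tfrac12$;
\item $z_1(f_2)=-\tfrac12$ and $z_3(f_2)=-\tfrac12$;
\item $z_1(f_3)=-\tfrac12$ and $z_2(f_3)=-\tfrac12$.
\end{itemize}
So $|z_i(f_j)|\le\frac12$ for $i\ne j$. Since $f_j$ only involves the first two coordinates and $n\ge 3$, we have $w_{i,n}(f_j)=r_n z_i(f_j)$, so $|w_{i,n}(f_j)|<1$. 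Hence $\sup_K |z(f_j)|=1$, attained only at $z_j$, and therefore $\|f_j\|_{Y^*}=1$.

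For \eqref{eq2}, note that $z_j\in K\subseteq B_{Y^{**}}$ with $z_j(f_j)=1$, which gives one inclusion. For the other, the plan is to show that every $z\in\overline{\co}^{w^*}(K)$ with $z(f_j)=1$ equals $z_j$. The key point is that $K$ is weak-star compact. The set $\frac14 B_{\ell_\infty}$ is weak-star compact, and the sequences $w_{i,n}$ converge weak-star to $z_i$, whose signed copies are included in $K$. So $K$ is a finite union of weak-star compact sets, and hence compact. On a compact set $K$ in a locally convex space, every $z\in\overline{\co}(K)$ is the barycenter of a Radon probability measure $\mu$ on $K$. If $z(f_j)=\int_K k(f_j)\,d\mu(k)=1$ while $k(f_j)\le 1$ on $K$ with equality only at $k=z_j$, then $\mu$ is concentrated on $\{z_j\}$, so $z=z_j$.

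The main obstacle is justifying this last step without heavy machinery. If we want to avoid the barycenter argument, we can instead use a strict separation bound. There exists $\delta(\eps)>0$ such that every $k\in K$ with $k(f_j)>1-\delta$ lies in a prescribed weak-star neighbourhood of $z_j$. This holds because such $k$ must be $w_{j,n}$ with $n$ large, since all other elements of $K$ satisfy $k(f_j)\le\frac12$. We then run the usual extreme-point argument: split a convex combination approximating $z$ into the part near $z_j$ and a part of small total weight, and pass to the weak-star limit. Either route reduces the lemma to the tabulated values above: $f_j$ equals $1$ at $z_j$, equals $r_n<1$ at $w_{j,n}$, and is at most $\frac12$ in absolute value elsewhere on $K$.
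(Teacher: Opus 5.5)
Your proposal is correct. The tabulation of $f_j$ on $K$ matches the paper's. You bound $\frac14 B_{\ell_\infty}$ uniformly by $\frac12$ and compute all off-diagonal values $z_i(f_j)=-\frac12$ explicitly. Your fallback argument, which takes convex combinations from $K$ and splits off the weight outside a neighbourhood of $z_j$, is exactly the paper's proof.

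Your primary route is different. You represent $z\in\overline{\co}^{w^*}(K)$ as the barycenter of a Radon probability measure $\mu$ on the weak-star compact set $K$. From $\int_K (1-k(f_j))\,d\mu(k)=0$, with a nonnegative continuous integrand vanishing only at $z_j$, you get $\mu=\delta_{z_j}$ and hence $z=z_j$. This replaces the paper's net-and-neighbourhood bookkeeping with one standard representation result. That result is itself proved by the same limiting argument, via weak-star compactness of the probability measures on $K$. It is also precisely the device the paper adopts later, in the proof of Lemma~\ref{lemma:higher-fiber}; the paper's proof of this lemma, by contrast, is self-contained.

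Likewise, you observe that the points of $K$ with $k(f_j)>\frac12$ all lie in $\{z_j\}\cup\{w_{j,n}:n\geq 3\}$, and that $r_n<1$ with $w_{j,n}\to z_j$ weak-star. This concrete argument replaces the paper's abstract compactness argument for $\sup_{K\setminus U} f_j<1$, and it makes visible why the weak-star convergence $w_{j,n}\to z_j$ is what drives the lemma.

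Two small corrections:
\begin{itemize}
\item $\sup_K|z(f_j)|=1$ is attained at both $\pm z_j$, so the uniqueness statement should be made for $z(f_j)$ without absolute values. That is in fact what you use.
\item In the elementary route you should take the neighbourhood $U$ convex, and use the boundedness of $K$ so that the small-weight part of the convex combination is negligible. The paper also leaves these points implicit.
\end{itemize}
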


\begin{proof} Suppose first that $j=1,2$. If $z \in \frac{1}{4}B_{\ell_{\infty}}$, then $|f_i(z)| \leq 1/4$. Also,  $|f_i(w_{j,n})| \leq r_n < 1$ for every $n \geq 3$ and $f_i(z_j) = 1$ if and only if $i=j$. So, for $i=1,2$, we have that 
\begin{equation} \label{lemmaeq1}
    \{z \in K: f_i(z) = 1 \} = \{z_i\}.
\end{equation}
 Now, let us consider the case where $j=3$. If $z \in \frac{1}{4} B_{\ell_{\infty}}$, then $|f_3(z)| \leq 1/2$. Also, $|f_3(w_{1,n})| = |f_3(w_{2,n})|= \frac{r_n}{2}$ while $|f_3(w_{3,n})|=r_n < 1$ for every $n \geq 3$. So,
 \begin{equation} \label{lemmaeq2}
 \{ z \in K: f_3(z) = 1\} = \{z_3\}.
 \end{equation}
Therefore, (\ref{lemmaeq1}) and (\ref{lemmaeq2}) show that $\{z \in K: f_j(z)=1\} = \{z_j\}$ for every $j=1,2,3$. We want to see that the same holds true for the unit ball $B_{Y^{**}}$ which by (\ref{eq1}) is equal to $\overline{\co}^{w^*}(K)$. Fix $j \in \{1,2,3\}$ and take $z \in B_{Y^{**}}$ with $f_j(z) = 1$. Since $B_{Y^{**}} = \overline{\co}^{w^*}(K)$, there exists a net $(z_{\alpha}) \subseteq \co(K)$ such that $z_{\alpha} \stackrel{w^*}{\longrightarrow} z$. We can write 
\begin{equation*}
    z_{\alpha} = \sum_{k=1}^{m_{\alpha}} \lambda_{\alpha, k} z_{\alpha, k}
\end{equation*}
where $z_{\alpha, k} \in K$, $\lambda_{\alpha, k} \geq 0$ and $\sum_{k=1}^{m_{\alpha}} \lambda_{\alpha, k} = 1$ for some $m_{\alpha} \in \N$. Then $f_j(z_{\alpha}) \longrightarrow f_j(z)$ for every $j=1,2,3$. Let us fix $j \in \{1,2,3\}$ and let $U$ be any $w^*$-neighborhood of $z_j$. Since $K \setminus U$ is compact and $z_j$ is the unique point of $K$ at which $f_j(z_j) = 1$, we have that $\sup_{x \in K \setminus U} f_j(x) < 1$. Therefore, there exists $\delta > 0$ such that $f_j(x) \leq 1 - \delta$ for every $x \in K \setminus U$. Consider now 
\begin{equation*}
    a_{\alpha} := \sum_{\{k: z_{\alpha, k} \not\in U\}} \lambda_{\alpha, k}.
\end{equation*}
Then,
\begin{equation*}
    f_j(z_{\alpha}) = \sum_{\{k: z_{\alpha, k} \in U\}} \lambda_{\alpha, k} f_j(z_{\alpha, k}) + \sum_{\{k: z_{\alpha, k} \not\in U\}} \lambda_{\alpha, k} f_j(z_{\alpha, k}) 
    \leq 1 - \delta a_{\alpha}.
\end{equation*}
Since $f_j(z_{\alpha}) \longrightarrow 1$, we have that $a_{\alpha} \longrightarrow 0$. This means that, asymptotically, the convex combinations $z_{\alpha}$ lie in every $w^*$-neighborhood $U$ of $z_j$, that is, $z_{\alpha} \stackrel{w^*}{\longrightarrow} z_j$. Therefore, $z = z_j$ and we are done. 
\end{proof}

Retain all the notation we have considered so far, including the one from Lemma \ref{lemma1}. Now, let us define the bounded linear operator $A: X \rightarrow Y^*$ given by $A(x_1) := f_1$ and $A(x_2) := f_2$. In particular,  $A(x_3) = A(-x_1 -x_2) = -f_1 - f_2 = f_3$. Clearly, we have that $\|A\| = 1$. Since we are working with a very specific shape of the unit ball (see again Figure \ref{fig1}), we have the following straightforward fact. We present a proof for the sake of completeness.

\begin{fact} \label{fact3} The bounded linear operator $A: X \rightarrow Y^*$ defined above attains its norm only at $\pm x_j$ for every $j=1,2,3$.
\end{fact}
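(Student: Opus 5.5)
Since $B_X$ is the hexagon $\co\{\pm x_1,\pm x_2,\pm x_3\}$, the map $x\mapsto \|Ax\|_{Y^*}$ is a convex, even function. It therefore suffices to do three things: compute $\|Ax\|_{Y^*}$ explicitly on the boundary of $B_X$, check that it equals $1$ at the six vertices, and show that it is strictly less than $1$ at every non-vertex boundary point. By Lemma~\ref{lemma1}, $\|Ax_j\|_{Y^*}=\|f_j\|_{Y^*}=1$ for $j=1,2,3$. Since the vertices of $B_X$ have norm one, $\|A\|\le 1$ by convexity, and hence $\|A\|=1$.

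The remaining task is to show that no point in the relative interior of an edge attains the norm. By symmetry $x\mapsto -x$, it is enough to treat the three edges $[x_1,-x_3]$, $[-x_3,x_2]$ and $[x_2,-x_1]$. A point of such an edge has the form $x=t a+(1-t)b$ with $a,b$ adjacent vertices and $0<t<1$, so $Ax=tA a+(1-t)Ab$. The edges and their images are:
\begin{itemize}
\item $(1,s)$ for $0<s<1$, with $Ax=e_1^*+s e_2^*$;
\item $(s,1)$ for $0<s<1$, with $Ax=s e_1^*+e_2^*$;
\item $(-s,1-s)$ for $0<s<1$, with $Ax=-s e_1^*+(1-s)e_2^*$.
\end{itemize}
For each such $g=Ax$, I will use $\|g\|_{Y^*}=\sup_{z\in B_{Y^{**}}} g(z)=\sup_{z\in K} g(z)$, which follows from \eqref{eq1} and weak$^*$-continuity of $g$. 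I then estimate $g$ over $K$.

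On $\frac14 B_{\ell_\infty}$ we have $g(z)\le \frac14(|g_1|+|g_2|)\le \frac12$. On $\pm w_{j,n}$ we have $|g(w_{j,n})|=r_n|g(z_j)|$, so everything reduces to computing $g(\pm z_j)$. Using $z_1=(1,-\frac12,\dots)$, $z_2=(-\frac12,1,\dots)$ and $z_3=(-\frac12,-\frac12,\dots)$:
\begin{itemize}
\item for $g=e_1^*+se_2^*$, the values $g(z_j)$ are $1-\frac s2$, $s-\frac12$ and $-\frac12-\frac s2$, all of absolute value less than $1$ when $0<s<1$;
\item the edge $(s,1)$ is symmetric to the first, with values $s-\frac12$, $1-\frac s2$ and $-\frac{1+s}{2}$;
\item for $g=-se_1^*+(1-s)e_2^*$, the values are $-s-\frac{1-s}2$, $\frac s2+1-s$ and $\frac s2-\frac{1-s}2$, i.e.\ $-\frac{1+s}2$, $1-\frac s2$ and $s-\frac12$, again all of absolute value less than $1$.
\end{itemize}
Hence $\max_{z\in K}|g(z)|=c<1$ on each open edge, and $\|Ax\|_{Y^*}<1$ there.

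The main obstacle is justifying that a supremum over the infinite, non-closed set $K$ stays strictly below $1$. This is handled as in Lemma~\ref{lemma1}. The set $K$ is weak$^*$ compact, since $w_{j,n}\to z_j$ weak$^*$ and the limits $z_j$ are included in $K$. The functional $g$ is weak$^*$-continuous, being an element of $\ell_1$. So the supremum is attained at a point of $K$, and every point of $K$ gives a value strictly less than $1$ by the computations above, with the bound $r_n<1$ for the $w_{j,n}$. Therefore $\|Ax\|_{Y^*}=1$ with $x\in S_X$ forces $x$ to be a vertex, that is, $x=\pm x_j$.
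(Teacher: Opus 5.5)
Your proof is correct, but it takes a different route from the paper's.

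The paper does no computation on the edges. It takes a norming $z\in B_{Y^{**}}$ with $z(Ax)=1$ for $x=t\eps_i x_i+(1-t)\eps_j x_j$. From $\eps_i z(f_i)\le 1$ and $\eps_j z(f_j)\le 1$ it concludes that both equal $1$. It then applies the exposure statement \eqref{eq2} of Lemma~\ref{lemma1} to get $z=\eps_i z_i=\eps_j z_j$, which is impossible for adjacent vertices, since then $i\neq j$ and $z_i\neq\pm z_j$.

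You instead use only $\|f_j\|_{Y^*}=1$ from Lemma~\ref{lemma1}, not its singleton-face part. You bound each $g=Ax$ directly on the generating set $K$ via \eqref{eq1}. Your values of $g(z_j)$ on the three edges are correct, and the reduction by evenness covers all six edges.

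What the paper's argument buys is economy and uniformity: there is no case analysis. It is the same mechanism reused in Lemma~\ref{lemma2} and in the Claim inside Lemma~\ref{symmetric1}: a norm-one combination forces each piece onto the exposed point $z_j$.

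What yours buys is explicit quantitative information. Since the supremum over $\overline{\co}^{w^*}(K)$ equals the supremum over $K$, your computation gives exactly $\|A(1,s)\|_{Y^*}=\max\{1-\tfrac s2,\tfrac{1+s}2\}$, which dips to $\tfrac34$ at the midpoint of that edge.

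Finally, the compactness step you flag as the main obstacle is not needed. Since $|g(\pm w_{j,n})|=r_n|g(z_j)|\le|g(z_j)|$, you already have $\sup_K|g|\le\max\{\tfrac12,\max_j|g(z_j)|\}<1$ directly, without any attainment argument.
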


\begin{proof} Let $x \in S_X$ be different from $\pm x_j$ for every $j=1,2,3$. Then, $x$ belongs to the relative interior of one of the sides of the hexagon (see Figure \ref{fig1}), which means we can write $x = t \eps_i x_i + (1 - t) \eps_j x_j$, where $0 < t < 1$, $\eps_i, \eps_j \in \{-1,1\}$ and $\eps_i x_i \not= \eps_j x_j$. Suppose that $\|Ax\|_{Y^*} = 1$. Then, there exists $z \in B_{Y^{**}}$ such that $z(Ax)=1$, that is,
\begin{equation*}
    1 = t \eps_i z(f_i) + (1 - t) \eps_j z(f_j) \leq 1
\end{equation*}
which implies that $\eps_i z(f_i) = \eps_j z(f_j) = 1$. By Lemma \ref{lemma1}, we must have $\eps_i z = z_i$ and $\eps_j z = z_j$, that is, $z = \eps_i z_i = \eps_j z_j$ as $\eps_i^2 = \eps_j^2 = 1$. This is impossible for two different vertices.
\end{proof}

\begin{fact} \label{fact4} Let $Z$ be any Banach space. Let $v \in X \pten Z$. Then,
\begin{equation*}
    \|v\|_{\pi} = \inf \left\{ \|w_1\| + \|w_2\| + \|w_3\|: v = \sum_{j=1}^3 x_j \otimes w_j \right\}.
\end{equation*}
\end{fact}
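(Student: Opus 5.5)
The plan is to use the quotient map $q\colon \ell_1^3 \to X$, $q(e_j) = x_j$, together with two standard facts. First, the projective tensor product respects quotients: $q \otimes \id_Z\colon \ell_1^3 \pten Z \to X \pten Z$ is again a quotient map (\cite[Proposition 2.5]{Ryan}). Second, $\ell_1^3 \pten Z$ is isometric to $\ell_1^3(Z)$, that is, to $Z^3$ with the norm $\|w_1\|+\|w_2\|+\|w_3\|$. Combining these, the norm of $v$ in $X \pten Z$ should be the infimum of $\|w_1\|+\|w_2\|+\|w_3\|$ over all preimages $\sum_j e_j \otimes w_j$ of $v$. Such a preimage is sent to $\sum_j x_j \otimes w_j$, so this infimum is exactly the right-hand side of the statement.

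The key steps, in order, are as follows.

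\begin{enumerate}
\item Show that $\sum_{j=1}^3 e_j \otimes w_j \mapsto (w_1,w_2,w_3)$ is an isometry from $\ell_1^3 \pten Z$ onto $\ell_1^3(Z)$.
\begin{enumerate}
\item For the bound $\|\sum_j e_j\otimes w_j\|_\pi \leq \sum_j \|w_j\|$, use the triangle inequality.
\item For the reverse bound, take any representation $\sum_k a_k \otimes b_k$ with $a_k \in \ell_1^3$. Then $w_j = \sum_k a_k(j)\, b_k$, so
\[
\sum_j \|w_j\| \leq \sum_k \Bigl(\sum_j |a_k(j)|\Bigr)\|b_k\| = \sum_k \|a_k\|_1\|b_k\|.
\]
\end{enumerate}
Since $\ell_1^3$ is finite-dimensional, every $u \in \ell_1^3 \pten Z$ can be written as $\sum_j e_j \otimes w_j$ with $w_j \in Z$, which gives completeness for free.
\item Invoke that $q \otimes \id_Z$ is a quotient map. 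For every $v \in X \pten Z$ this gives
\[
\|v\|_\pi = \inf\{\|u\|_\pi : (q\otimes \id_Z)(u) = v\},
\]
and the infimum is over a nonempty set.
\item Rewrite this infimum using step 1. Each $u$ is $\sum_j e_j \otimes w_j$, with $\|u\|_\pi = \sum_j \|w_j\|$ and $(q \otimes \id_Z)(u) = \sum_j x_j \otimes w_j$. The infimum therefore becomes the claimed formula.
\end{enumerate}

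The main obstacle is justifying that the quotient property holds for the completed tensor product, and in particular that every $v$ in the completion has a preimage. Since $X$ is finite-dimensional, $X \otimes Z$ is already complete: writing $v = e_1\otimes a + e_2 \otimes b$ shows that every element is an algebraic tensor. Surjectivity then follows directly from $x_1 = e_1$ and $x_2 = e_2$.

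Alternatively, one can avoid the citation by a direct argument. For the inequality $\leq$, the triangle inequality gives $\|\sum_j x_j\otimes w_j\|_\pi \leq \sum_j \|w_j\|$, since $\|x_j\|_X \leq 1$. For $\geq$, take a representation $v = \sum_k y_k \otimes b_k$ with $\sum_k \|y_k\|\|b_k\| < \|v\|_\pi + \eps$. By the formula $\|y\|_X = \inf\{|a|+|b|+|c|\}$, each $y_k$ can be written as $a_k x_1 + b'_k x_2 + c_k x_3$ with $|a_k|+|b'_k|+|c_k| < \|y_k\| + \eps'$. Collecting coefficients gives
\[
w_1 = \sum_k a_k b_k,\qquad w_2 = \sum_k b'_k b_k,\qquad w_3 = \sum_k c_k b_k,
\]
which satisfy $v = \sum_j x_j\otimes w_j$ and $\sum_j \|w_j\| \leq \|v\|_\pi + 2\eps$ once $\eps'$ is chosen small relative to $\sum_k \|b_k\|$. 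I would present this direct argument as the proof, since it is self-contained.
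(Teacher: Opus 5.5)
Your proposal is correct. Steps 1--3 of your first plan are the paper's proof. The paper notes that $q\otimes\id_Z\colon \ell_1^3\pten Z\to X\pten Z$ is a quotient map, identifies $\ell_1^3\pten Z$ isometrically with $Z\oplus_1 Z\oplus_1 Z$, and reads off the quotient norm. You additionally verify that identification by hand and make surjectivity explicit.

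The direct argument you say you would actually present is a more elementary variant. Instead of citing quotient-preservation, you inline its proof for this particular quotient: you lift the first factor $y_k$ of each term in a near-optimal representation of $v$ through $\|y\|_X=\inf\{|a|+|b|+|c|\}$ and collect coefficients. The reverse inequality then comes from the triangle inequality and $\|x_j\|_X=1$. The paper's route is shorter and conceptual: it amounts to ``$X=\ell_1^3/\langle(1,1,1)\rangle$, projective norms respect quotients, and $\ell_1^3\pten Z=\ell_1^3(Z)$.'' Yours is self-contained. Both extend verbatim to any $X$ whose unit ball is $\aco\{x_1,\ldots,x_n\}$.

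Two small remarks. First, the infimum defining $\|y\|_X$ is attained. The representations of $y$ form a line $(a,b,c)+\R(1,1,1)$, and you are minimizing the coercive convex function $t\mapsto|a+t|+|b+t|+|c+t|$ along it, so you may take $\eps'=0$. Second, as phrased, your justification that $X\otimes Z$ is complete (``writing $v=e_1\otimes a+e_2\otimes b$ shows \ldots'') assumes the decomposition it is meant to justify. It should rest on the boundedness of the coordinate maps $e_i^*\otimes\id_Z$, which shows that $X\otimes_\pi Z$ is isomorphic to $Z\oplus Z$. Alternatively, your direct argument works equally well with absolutely convergent series representations $v=\sum_{k=1}^\infty y_k\otimes b_k$ in the completion, choosing the tolerances $\eps'_k$ so that $\sum_k \eps'_k\|b_k\|$ is small.
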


\begin{proof} Consider the quotient map $q: \ell_1^3 \rightarrow X$ as before. Given that $X = \ell_1^3 / \langle (1,1,1) \rangle$, we have that $q$ is a quotient map. Since the projective tensor product preserves quotients, $q \otimes \id_Z: \ell_1^3 \pten Z \rightarrow X \pten Z$ is a quotient map. Now, $\ell_1^3 \pten Z$ is isometrically equal to $Z \oplus_1 Z \oplus_1 Z$, that is, every element of $\ell_1^3 \pten Z$ can be written uniquely as $\sum_{j=1}^3 e_j \otimes w_j$, for some $w_j \in Z$ for $j=1,2,3$ and its norm is 
\begin{equation*}
    \left\| \sum_{j=1}^3 e_j \otimes w_j \right\|_{\pi} = \|w_1\| + \|w_2\| + \|w_3\|.
\end{equation*}
Applying $q \otimes \id_Z$, we get 
\begin{equation*}
    (q \otimes \id_Z) \left( \sum_{j=1}^3 e_j \otimes w_j \right) = \sum_{j=1}^3 x_j \otimes w_j.
\end{equation*}
Since $q \otimes \id_Z$ is a quotient map, the norm of $v$ is the quotient norm, that is, $\|v\|_{\pi} = \inf \{ \|w\|_{\pi}: (q \otimes \id_Z)(w) = v \}$. Since every $w$ has the form $w = \sum_{j=1}^3 e_j \otimes w_j$ with $v = \sum_{j=1}^3 x_j \otimes w_j$ and $\|w\|_{\pi} = \|w_1\| + \|w_2\| + \|w_3\|$, we are done.
\end{proof}

\begin{remark} \label{remark1} Let us observe that, in Fact \ref{fact4}, if $Z$ is a dual space, the infimum is always attained. Indeed, in this case we can use the $w^*$-compactness of the unit balls. Therefore, a minimizing net admits a $w^*$-convergent subnet whose limit attains the infimum.
\end{remark}

We need one more result before proving Theorem \ref{theorem:full-projective-main}.

\begin{lemma} \label{lemma2} The convex hull $\mathbf{S} = \co \{u_1, u_2, u_3\}$ is an exposed face of $B_{X \pten Y^{**}}$.
\end{lemma}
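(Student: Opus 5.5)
We will expose $\mathbf{S}$ by the bilinear form associated with the operator $A$, namely $\varphi(x\otimes z) := z(Ax)$ for $x \in X$ and $z \in Y^{**}$. Since $\|A\| = 1$, we have $\|\varphi\| \leq 1$ on $X \pten Y^{**}$. We also have $\varphi(u_j) = z_j(f_j) = 1$ for $j=1,2,3$, by Lemma~\ref{lemma1}. Moreover, $\|u_j\|_\pi \leq \|x_j\|\,\|z_j\| \leq 1$, since $z_j \in K \subseteq B_{Y^{**}}$ and $x_j \in B_X$. Hence $\sup_{B_{X\pten Y^{**}}}\varphi = 1$, and $\mathbf{S}$ is contained in the face $F := \{v \in B_{X\pten Y^{**}} : \varphi(v) = 1\}$. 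The real content is the reverse inclusion $F \subseteq \mathbf{S}$.

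Take $v \in F$. Apply Fact~\ref{fact4} with $Z = Y^{**}$, together with Remark~\ref{remark1}; this is legitimate because $Y^{**}$ is a dual space. We obtain $w_1,w_2,w_3 \in Y^{**}$ with
\[
v = \sum_{j=1}^3 x_j \otimes w_j
\quad\text{and}\quad
\|w_1\|+\|w_2\|+\|w_3\| = \|v\|_\pi \leq 1.
\]
Then
\[
1 = \varphi(v) = \sum_{j=1}^3 w_j(f_j) \leq \sum_{j=1}^3 \|w_j\|\,\|f_j\| = \sum_{j=1}^3 \|w_j\| \leq 1,
\]
so equality holds throughout. Set $t_j := \|w_j\|$. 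Then $t_j \geq 0$, $\sum t_j = 1$, and $w_j(f_j) = t_j$ for each $j$.

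For every $j$ with $t_j > 0$, the vector $w_j/t_j$ lies in $B_{Y^{**}}$ and takes the value $1$ at $f_j$. Lemma~\ref{lemma1} then forces $w_j/t_j = z_j$, that is, $w_j = t_j z_j$. When $t_j = 0$ we have $w_j = 0 = t_j z_j$ trivially. Therefore
\[
v = \sum_{j=1}^3 t_j\, x_j \otimes z_j = \sum_{j=1}^3 t_j u_j \in \mathbf{S},
\]
which gives $F = \mathbf{S}$, so $\varphi$ exposes the face $\mathbf{S}$.

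The main obstacle is ensuring that the infimum in Fact~\ref{fact4} is actually attained, so that we get an exact representation rather than only an approximate one. This is precisely what Remark~\ref{remark1} supplies for the dual space $Y^{**}$. If one prefers not to rely on it, the fallback is an approximation argument: take near-optimal representations and use weak$^*$ compactness of $B_{Y^{**}}$ to pass to limits of the $w_j$ and of the $t_j$. Lemma~\ref{lemma1} then pins down the limits as $t_j z_j$, and one must check that the limiting tensor equals $v$. That check holds because the tensor product with the finite-dimensional space $X$ is weak$^*$ continuous in each $w_j$.
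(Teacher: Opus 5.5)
Your proposal is correct and follows the paper's proof essentially step for step. You use the same functional $\varphi(x\otimes z)=z(Ax)$, the exact three-term representation from Fact~\ref{fact4} and Remark~\ref{remark1}, the equality chain forcing $w_j(f_j)=\|w_j\|$, and Lemma~\ref{lemma1} to conclude $w_j=\|w_j\|z_j$; your explicit handling of the case $t_j=0$ is a small detail the paper leaves implicit.
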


\begin{proof} Let us define $\varphi \in (X \pten Y^{**})^*$ by $\varphi(x \otimes z) := z(Ax)$ 
for every $x \in X$ and $z \in Y^{**}$. Since $\|A\| = 1$, it follows that $\|\varphi\| \leq 1$. We will see that $\varphi$ exposes $\mathbf{S}$. For every $j=1,2,3$, we have that
\begin{equation*}
    \varphi(x_j \otimes z_j) = z_j(Ax_j) = z_j(f_j) = 1.
\end{equation*}
Since $\|x_j\|_X = \|z_j\|_{Y^{**}} = 1$, then $u_j = x_j \otimes z_j \in B_{X \pten Y^{**}}$. This shows that $\mathbf{S} \subseteq \{ v \in B_{X \pten Y^{**}}: \varphi(v) = 1\}$. For the other inclusion, let $v \in B_{X \pten Y^{**}}$ with $\varphi(v) = 1$. Since $\varphi(v) = 1$ and $\|\varphi\| \leq 1$, we have that $\|v\|_{\pi} = 1$. By Fact \ref{fact4} and Remark \ref{remark1}, there exist $w_1, w_2, w_3 \in Y^{**}$ such that 
\begin{equation*}
    v = \sum_{j=1}^3 x_j \otimes w_j \ \ \ \mbox{with} \ \ \ \sum_{j=1}^3 \|w_j\| = \|v\|_{\pi} = 1.
\end{equation*}
So,
\begin{equation*}
    1 = \varphi(v) = \sum_{j=1}^3 w_j(f_j) \leq \sum_{j=1}^3 \|w_j\| = 1.
\end{equation*}
This shows that $w_j(f_j) = \|w_j\|$ for every $j=1,2,3$. By Lemma \ref{lemma1}, we have that $w_j = \|w_j\| z_j$ for every $j=1,2,3$ with $\sum_{j=1}^3 \|w_j\| = 1$. This means that 
\begin{equation*}
    v = \sum_{j=1}^3 \|w_j\| x_j \otimes z_j = \sum_{j=1}^3 \|w_j\| u_j \in \mathbf{S}.
\end{equation*}
\end{proof}

Finally we can prove the main result of this section.

\begin{proof}[Proof of Theorem \ref{theorem:full-projective-main}] Consider the spaces $X$ and $Y$ constructed above and let $u_j = x_j \otimes z_j$ for every $j=1,2,3$, where $z_1 - z_3=\frac{3}{2}e_1$ and $z_2 - z_3=\frac{3}{2}e_2$. Consider also $u$ as in (\ref{vector-u}). Since $x_1+x_2 + x_3 = 0$, we have that 
\begin{equation*}
    u = \frac{1}{2} \big( x_1 \otimes e_1 + x_2 \otimes e_2\big ) \in X \pten Y
\end{equation*}
and it is rank 2 as $x_1$ and $x_2$ are linearly independent. Lemma \ref{lemma2} says that $\mathbf{S} =\co \{ u_1, u_2, u_3 \}$ is an exposed face of $B_{X \pten Y^{**}}$ and Fact \ref{fact1} says that $\mathbf{S} \cap (X \pten Y) = \{u\}$. Therefore, the restriction to $X \pten Y$ of the functional $\varphi$ which exposed this face exposes $u$. In other words, $u \in \Exp(B_{X \pten Y})$ and we are done.  
\end{proof}

\section{The symmetric projective tensor product case} \label{section:symmetric}

The main result of this section is the symmetric counterpart of Theorem \ref{theorem:full-projective-main}.

\begin{introthm}\label{theorem:symmetric-main} There exist a real Banach space $Z$ and an exposed point of the unit ball $B_{\polten{2}{Z}}$ which is not of the form $\pm w^{\otimes 2}$ for any $w \in Z$.
\end{introthm}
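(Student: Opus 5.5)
We plan to build $Z$ from the spaces $X$ and $Y$ of Section~\ref{sec:full tensor}, take $Z := X \oplus_1 Y$, and turn the bilinear functional $\varphi$ of Lemma~\ref{lemma2} into a $2$-homogeneous polynomial. For $w = (x,y) \in Z$ set $P(w) := 4\langle Ax, y\rangle$, where $A : X \to Y^*$ is the operator of Fact~\ref{fact3}. Then $|P(w)| \leq 4\|x\|\|y\| \leq (\|x\|+\|y\|)^2 = \|w\|^2$, so $\|P\| \leq 1$. The analogous quantity $|\langle P,\, (x,y)^{\otimes 2}\rangle|$ reaches $1$ in $Z^{**} = X \oplus_1 Y^{**}$ exactly when $\|x\| = \|y\| = \tfrac12$ and $\langle Ax, y\rangle = \pm \|x\|\|y\|$. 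By Fact~\ref{fact3} and Lemma~\ref{lemma1} this happens at the points $a_j := \tfrac12(x_j, z_j)$, where $P = 1$, and $b_j := \tfrac12(x_j, -z_j)$, where $P = -1$, up to sign. So the candidate exposed face in the bidual is
\[
\mathbf{S}_s := \co\{\, a_j^{\otimes 2},\ -b_j^{\otimes 2} : j = 1,2,3 \,\}.
\]

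\emph{Step 1 (algebra, analogue of Fact~\ref{fact1}).} Take $v = \sum_j \bigl(s_j a_j^{\otimes 2} - t_j b_j^{\otimes 2}\bigr)$ with $s_j, t_j \geq 0$ and $\sum_j (s_j + t_j) = 1$, and assume $v \in \bigotimes^{2,s} Z$. Split $v$ into its $X\otimes X$, $X\otimes Y^{**}$ and $Y^{**}\otimes Y^{**}$ parts.

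The $Y^{**}\otimes Y^{**}$ part is $\tfrac14 \sum_j c_j\, z_j \otimes z_j$ with $c_j = s_j - t_j$, and it must lie in $c_0 \otimes c_0$. Looking at the tail entries $(k,l)$ with $k,l \geq 3$ gives $\sum_j c_j = 0$. The rows $k = 1, 2$ with $l \geq 3$ give $c_1 - \tfrac12 c_2 - \tfrac12 c_3 = 0$ and $-\tfrac12 c_1 + c_2 - \tfrac12 c_3 = 0$. Together these force $c_j = 0$, i.e.\ $s_j = t_j$.

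The mixed part is then $\tfrac12 \sum_j (s_j + t_j)\, x_j \vee z_j$, and the argument of Fact~\ref{fact1} forces $s_j + t_j$ to be independent of $j$. Hence $s_j = t_j = \tfrac16$, the $X\otimes X$ parts cancel, and
\[
v = v_0 := \tfrac16 \sum_{j=1}^3 x_j \vee z_j = \tfrac14\bigl(x_1 \vee e_1 + x_2 \vee e_2\bigr).
\]
Viewed as a symmetric bilinear form, $v_0$ has rank $4$, because $x_1, x_2, e_1, e_2$ are independent in $Z$. So $v_0 \neq \pm w^{\otimes 2}$ for every $w \in Z$. It has norm at most $1$ since it lies in $\mathbf{S}_s$, and $P(v_0) = 1$.

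\emph{Step 2 ($P$ exposes $v_0$ in $B_{\polten{2}{Z}}$).} To reuse Theorem~\ref{theorem:full-projective-main}, I would introduce the linear map $T : \polten{2}{Z} \to X \pten Y$ determined by $T\bigl((x,y)^{\otimes 2}\bigr) = 2\, x \otimes y$. The bound $2\|x\|\|y\| \leq \tfrac12 \|(x,y)\|^2$ gives $\|T\| \leq \tfrac12$, and $P = 2\varphi \circ T$. So if $\|v\|_{\pi_s} \leq 1$ and $P(v) = 1$, then $2Tv \in B_{X \pten Y}$ with $\varphi(2Tv) = 1$. By the proof of Theorem~\ref{theorem:full-projective-main}, this gives $2Tv = u$, which pins down the mixed component of $v$.

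It then remains to show that the $X\otimes X$ and $Y\otimes Y$ components of $v$ vanish. For this I would take near-optimal representations $v = \lim \sum_i \lambda_i w_i^{\otimes 2}$ with $\sum_i |\lambda_i|\,\|w_i\|^2 \to 1$. Since $P(v) = 1$, almost all of the mass must sit on terms with $\operatorname{sign}(\lambda_i)\, P(w_i) \approx \|w_i\|^2$. Equality in AM--GM then forces $\|x_i\| \approx \|y_i\|$. A quantitative form of Lemma~\ref{lemma1} and Fact~\ref{fact3} (compactness of $B_X$, plus the $w^*$-compactness argument used there) should push these $w_i$ toward $\pm a_j$ or $\pm b_j$ in the appropriate sense. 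Passing to $w^*$-limits in $Z^{**}$ should then place $v$ in $\mathbf{S}_s$, and Step~1 gives $v = v_0$.

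\emph{Main obstacle.} The hard step is this last one: proving that the exposed face of $P$ in the symmetric bidual ball is exactly $\mathbf{S}_s$. Unlike Fact~\ref{fact4}, we have no quotient formula that gives attained decompositions, so the limiting argument over measures on $\pm w^{\otimes 2}$ has to be carried out by hand. Only the diagonal components need this argument, since the mixed one is already handled through $T$. If it becomes unwieldy, I would first try to prove a symmetric analogue of Fact~\ref{fact4}, namely that a quotient-type description of $B_{\polten{2}{Z^{**}}}$ on the relevant finite-dimensional slices yields attained representations.
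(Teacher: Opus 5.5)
Your architecture is the paper's, moved from $X\oplus_\infty Y$ to $X\oplus_1 Y$:
\begin{itemize}
\item your $P=2\varphi\circ T$ plays the role of $\Psi=\varphi\circ J^{-1}\circ M$, which on $X\oplus_\infty Y$ is the polynomial $(x,y)\mapsto\langle Ax,y\rangle$;
\item your $T$-step is the paper's observation that $\Psi(v)=1$ forces $Mv=u_s$;
\item your Step~1 is the computation with $(e_k^*)^2$, $e_1^*e_k^*$, $e_2^*e_k^*$ at the end of Lemma~\ref{symmetric1}.
\end{itemize}
These parts are fine, as are the AM--GM localisation of the maximisers and the rank-$4$ argument.

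The gap is the step you flag yourself. Showing that the $X\otimes X$ and $Y\otimes Y$ components of $v$ vanish is the whole content of Lemma~\ref{symmetric1}, and your proposal leaves it at ``should''. The route you sketch also does not work as stated, because the maximisers cannot be approached in norm. For instance, $\tfrac12(x_1,w_{1,n})\in B_Z$ has $P=r_n\to1$, yet $\|w_{1,n}-z_1\|_{Y^{**}}\geq\|w_{1,n}-z_1\|_\infty=1$; $f_1$ exposes $z_1$ but does not strongly expose it. So the only convergence available is $w^*$ in the $Y^{**}$-coordinate. Since $w\mapsto\langle Q,w^{\otimes 2}\rangle$ is not $w^*$-continuous for a general $Q\in\mathcal P(^2Z^{**})$, passing to $w^*$-limits does not by itself place $v$ in $\mathbf S_s$ as an element of $\polten{2}{Z^{**}}$. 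Nor do you have (or need) a symmetric analogue of Fact~\ref{fact4} and Remark~\ref{remark1} giving attained representations. Note also that Step~1 is stated for algebraic tensors, while $v$ lies in the completion.

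The missing idea is to stay in $\polten{2}{Z}$ and test $v$ only against $2$-homogeneous polynomials that depend on $x$ and on finitely many coordinates of $y$.
\begin{itemize}
\item Let $K=\{-1,1\}\times\{(x,y)\in X\times Y^{**}:\|x\|+\|y\|\leq1\}$ with the norm$\times w^*$ topology. It is compact metrizable, since $\|\cdot\|_{Y^{**}}$ is $w^*$-lower semicontinuous and $Y^*=\ell_1$ is separable.
\item The function $h(\eps,x,y)=4\eps\, y(Ax)$ is continuous on $K$ with $h\leq 1$. By your AM--GM remark, Fact~\ref{fact3} and Lemma~\ref{lemma1}, $h=1$ exactly at the twelve points $(\eps,\tfrac12 s x_j,\tfrac12\eps s z_j)$.
\item Take near-optimal representations $v=\sum_k c_{n,k}\eps_{n,k}w_{n,k}^{\otimes 2}$ with $\sum_k c_{n,k}\leq 1+\tfrac1n$. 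They give measures $\mu_n=\sum_k c_{n,k}\delta_{(\eps_{n,k},w_{n,k})}$ with $\int h\,d\mu_n=P(v)=1$. A $w^*$-cluster point $\mu$ is a probability measure with $h=1$ on $\supp\mu$.
\item For each finite-coordinate $Q$, the function $\eps\tilde Q$ is continuous on $K$. Hence $\langle Q,v\rangle=\int\eps\tilde Q\,d\mu=\langle\tilde Q,\sigma\rangle$ for a fixed $\sigma\in\mathbf S_s$, whose weights are the $\mu$-masses of the twelve points.
\end{itemize}
Now run Step~1 through these identities. For $Q=(e_k^*)^2$, $e_1^*e_k^*$, $e_2^*e_k^*$ you have $\langle Q,v\rangle\to0$ as $k\to\infty$ by dominated convergence, because $v\in\polten{2}{Z}$ and these polynomials tend boundedly to $0$ on $c_0$. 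This forces the signed weights $c_j$ of Step~1 to vanish, so the diagonal parts of $\sigma$ are zero. Your $T$-step, or the same computation with $x^*(x)e_k^*(y)$, then gives $\sigma=v_0$. Finally, finite-coordinate polynomials separate the points of $\polten{2}{Z}$ (replace $y$ by $P_n y$), so $v=v_0$. This is exactly the paper's proof of Lemma~\ref{symmetric1}, adapted to $\oplus_1$.
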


The main idea to prove Theorem \ref{theorem:symmetric-main} is to transfer the exposed tensor obtained in Section \ref{sec:full tensor} to a symmetric projective tensor product. More precisely, we will construct a Banach space $Z$ such that $X \pten Y$ can be identified through an isometry $J: X \pten Y \rightarrow \polten{2}{Z}$ with a 1-complemented subspace of $\polten{2}{Z}$. We denote by $M: \polten{2}{Z} \rightarrow J(X \pten Y)$ the corresponding contractive projection and we set $u_s := J(u)$. If $v \in B_{\polten{2}{Z}}$ satisfies $\Psi(v) = 1$, where $\Psi := \varphi \circ J^{-1} \circ M$, then, since both $M$ and $J^{-1}$ are contractive on their respective ranges, $\|J^{-1} M(v)\|_{\pi} \leq 1$. Since $\varphi$ exposes $u$ from the previous section, it follows that $J^{-1} M(v) = u$ and hence $M(v) = u_s$. Therefore, in order to prove that $\Psi$ exposes $u_s$, it will be enough to show that 
\begin{equation*}
B_{\polten{2}{Z}} \cap M^{-1}(u_s) = \{ u_s \}. 
\end{equation*}
That is precisely what we do in the remainder of this section. We ask the reader to keep in mind the notation introduced in the previous section, which will be used throughout the passage to the symmetric setting. The decomposition of symmetric tensor products of direct sums is done in \cite[Sections 2.2 and 3.4]{AnsemilFloret}. It provides a useful tool for transferring examples from the projective tensor product to the symmetric projective tensor product (see also \cite[Sections~2.8 and~2.9]{FloretSymmetric}).

We start with the transferring result, which is well-known to experts (see Fact \ref{symmetric:fact1} below). Nevertheless, we present a proof for completeness. Let $X$ and $Y$ be the Banach spaces constructed in Section \ref{sec:full tensor}. The Banach space of Theorem \ref{theorem:symmetric-main} is the following direct sum
\begin{equation*}
    Z := (X \oplus_{\infty} Y, \| \cdot \|_{\infty}).
\end{equation*}

\begin{fact} \label{symmetric:fact1} The map $J: X \otimes Y \rightarrow \bigotimes^{2,s} Z$ given by 
\begin{equation*}
    J(x \otimes y) := 2 (x,0) \vee (0, y) 
\end{equation*}
for every $x \in X$ and $y \in Y$, extends to a linear isometry $J: X \pten Y \rightarrow \polten{2}{Z}$ and its range is 1-complemented.
\end{fact}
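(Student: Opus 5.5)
Our plan is to prove the upper bound $\|J(v)\|_{\pi_s}\le\|v\|_\pi$ directly. For the reverse inequality we construct a contractive map $P\colon \polten{2}{Z}\to X\pten Y$ with $P\circ J=\id$. Once both are available, $J$ is an isometry, and $M:=J\circ P$ is a norm-one projection onto $J(X\pten Y)$.

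\emph{Contractivity of $J$.} The map $(x,y)\mapsto 2(x,0)\vee(0,y)$ is bilinear, so $J$ is well defined and linear on $X\otimes Y$. It suffices to show $\|2(x,0)\vee(0,y)\|_{\pi_s}\le\|x\|\|y\|$. By homogeneity we may assume $\|x\|=\|y\|=1$. We use the polarization identity
\begin{equation*}
2\,a\vee b=\tfrac12\big((a+b)^{\otimes 2}-(a-b)^{\otimes 2}\big),
\end{equation*}
with $a=(x,0)$ and $b=(0,y)$. In $Z=X\oplus_\infty Y$ we have $\|a\pm b\|_\infty=\max\{\|x\|,\|y\|\}=1$, so $\|J(x\otimes y)\|_{\pi_s}\le\frac12(1+1)=1$. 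The $\ell_\infty$-sum is exactly what makes this estimate sharp. Taking infima over representations of $v$ gives $\|J(v)\|_{\pi_s}\le\|v\|_\pi$, and $J$ therefore extends to the completion.

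\emph{The left inverse $P$.} Let $p_X\colon Z\to X$ and $p_Y\colon Z\to Y$ be the coordinate projections; both have norm one. Consider the bilinear map $Z\times Z\to X\pten Y$ given by
\begin{equation*}
(w,w')\mapsto \tfrac12\big(p_X w\otimes p_Y w'+p_X w'\otimes p_Y w\big).
\end{equation*}
It is symmetric and its associated $2$-homogeneous polynomial is $Q(w)=p_Xw\otimes p_Yw$. This polynomial satisfies $\|Q(w)\|_\pi\le\|p_Xw\|\|p_Yw\|\le\|w\|^2$. By the universal property of $\polten{2}{Z}$ (the linearization of $Q$, which is well defined because $\bigotimes^{2,s}Z$ is spanned by squares and $Q$ is a genuine polynomial), we obtain a linear map $P$ with $P(w^{\otimes 2})=Q(w)$. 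It satisfies $\|P\|\le\sup_{\|w\|\le1}\|Q(w)\|_\pi\le 1$, because $\|\sum\lambda_jP(w_j^{\otimes2})\|\le\sum|\lambda_j|\|w_j\|^2$.

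On the symmetric product $P$ acts through the symmetric bilinear form: $P(a\vee b)=\frac12(p_Xa\otimes p_Yb+p_Xb\otimes p_Ya)$. For $a=(x,0)$ and $b=(0,y)$ this gives $P(2a\vee b)=x\otimes y$, so $P\circ J=\id$ on $X\otimes Y$, and by continuity on $X\pten Y$. Consequently
\begin{equation*}
\|v\|_\pi=\|PJv\|_\pi\le\|Jv\|_{\pi_s}\le\|v\|_\pi,
\end{equation*}
so $J$ is an isometry. Moreover $M=JP$ is an idempotent of norm at most one whose range is $J(X\pten Y)$.

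The only step that needs care is justifying that $P$ is well defined as a linear map on $\bigotimes^{2,s}Z$, i.e.\ checking consistency with the identification $\langle \check P, a\vee b\rangle$. We handle it via the standard fact that continuous $2$-homogeneous polynomials $Z\to W$ linearize uniquely to operators $\polten{2}{Z}\to W$ with the same norm. This is the vector-valued version of the duality $(\polten{2}{Z})^*=\mathcal P(^2Z)$ recalled in the Notation section, and it can be verified by composing with functionals on $X\pten Y$.
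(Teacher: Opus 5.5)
Your proof is correct. The upper bound is the paper's polarization argument: you normalize $\|x\|=\|y\|=1$ by bilinearity instead of rescaling by $\sqrt{\|y\|/\|x\|}$. The lower bound and the complementation, however, are organized differently.

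The paper proves $\|w\|_\pi\le\|Jw\|_{\pi_s}$ by scalar duality, testing against $Q_B(x,y)=B(x,y)$ for $B\in\mathcal B(X\times Y)$. It then builds the projection separately as $M=\frac12(\id-S^{\otimes_{2,s}})$ from the sign change $S(x,y)=(x,-y)$, and identifies its range by computing $M((x,y)^{\otimes 2})=J(x\otimes y)$.

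You instead linearize the vector-valued polynomial $w\mapsto p_Xw\otimes p_Yw$ to a contraction $P$ with $PJ=\id$. This single map gives both the lower bound and the projection $JP$, whose range is $J(X\pten Y)$ for free.

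The two arguments are closely linked. $P^*$ is exactly the map $B\mapsto Q_B$, and $JP$ coincides with the paper's $M$, since both send $(x,y)^{\otimes 2}$ to $J(x\otimes y)$. So nothing downstream, such as $\id=M+P_X+P_Y$ in Lemma~\ref{symmetric1}, is affected.

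Your route is more economical, but it leans on the linearization of vector-valued polynomials, which you justify adequately. The paper's route needs only the scalar duality $(\polten{2}{Z})^*=\mathcal P(^2Z)$, and it gets contractivity of $M$ immediately as an average of two isometries. This sign-averaging is what the paper reuses for $M_N$ in Section~\ref{sec:higher-order}. Your method would extend there just as well, by linearizing $w\mapsto t_1\cdots t_{N-2}\,p_Xw\otimes p_Yw$ on $F_N$.
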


\begin{proof} Recall the definition of $\vee$ in (\ref{sym}). For arbitrary elements $v,w$ we have the identity $(v+w)^{\otimes 2} - (v-w)^{\otimes 2} = 4 v \vee w$, which implies that 
\begin{equation} \label{symmetric-eq1}
    2 v \vee w = \frac{1}{2} \big( (v+w)^{\otimes 2} - (v-w)^{\otimes 2} \big).
\end{equation}
Now, take $v = \left( \sqrt{ \frac{\|y\|}{\|x\|}} x, 0 \right)$ and $w = \left( 0, \sqrt{ \frac{\|x\|}{\|y\|}} y \right)$ for non-zero $x \in X$ and $y \in Y$. Then, we have that 
\begin{equation*}
    v \vee w = \left( \sqrt{ \frac{\|y\|}{\|x\|}} x, 0 \right) \vee \left( 0, \sqrt{ \frac{\|x\|}{\|y\|}} y \right) = (x,0) \vee (0,y).
\end{equation*}
Hence, $J(x \otimes y) = 2 v \vee w$ and by (\ref{symmetric-eq1}) we get 
\begin{equation*}
    J(x \otimes y) = \frac{1}{2} \left[ \left( \sqrt{ \frac{\|y\|}{\|x\|}} x, \sqrt{ \frac{\|x\|}{\|y\|}} y \right)^{\otimes 2} - \left( \sqrt{ \frac{\|y\|}{\|x\|}} x, -\sqrt{ \frac{\|x\|}{\|y\|}} y \right)^{\otimes 2}\right].
\end{equation*}
Since both vectors inside the squares have norm $\sqrt{\|x\| \|y\|}$ in $Z$, we obtain 
\begin{equation*}
\|J(x \otimes y)\|_{\pi_s} \leq \|x\| \|y\|.
\end{equation*}
This implies that $\|Jw\|_{\pi_s} \leq \|w\|_{\pi}$ for every $w \in X \otimes Y$. Conversely, let $B \in \mathcal{B}(X \times Y)$ be a bounded bilinear form and define $Q_B(x,y) := B(x,y)$ for every $(x, y) \in Z$. Then, $Q_B \in \mathcal{P}(^2 Z)$ and $\|Q_B\| = \|B\|$. Moreover, the symmetric form $\check{Q}_B \in \mathcal{L}_s(^2 Z)$ associated to $Q_B$ is given by 
\begin{equation*}
\check{Q}_B((x_1, y_1), (x_2, y_2)) = \frac{1}{2} (B(x_1, y_2) + B(x_2, y_1))
\end{equation*}
for every $(x_1, y_1), (x_2, y_2) \in Z$. By duality 
\begin{equation*}
    \langle Q_B, J(x \otimes y) \rangle = \langle Q_B, 2(x,0) \vee (0,y) \rangle  = 2 \check{Q}_B((x,0), (0,y)) = B(x,y)
\end{equation*}
for every $x \in X$ and $y \in Y$. Thus, for every $w \in X \otimes Y$, we have that $| \langle B, w \rangle| \leq \|B\| \|Jw\|_{\pi_s}$. Taking the supremum over $B \in B_{\mathcal{B}(X \times Y)}$ gives $\|w\|_{\pi} \leq \|Jw\|_{\pi_s}$. Therefore, $J$ extends to an isometry.

Now, define $S: Z \rightarrow Z$ by $S(x,y) := (x,-y)$ for every $(x,y) \in Z$. Clearly, $S$ is a surjective isometry and $S^2 = \id_Z$. Let now $S^{\otimes_{2,s}}: \polten{2}{Z} \rightarrow \polten{2}{Z}$ be the isometry induced by $S$. Thus, $S^{\otimes_{2,s}}(v \vee w) = Sv \vee Sw$ for every $v,w \in Z$. In particular,
\begin{equation*}
    S^{\otimes_{2,s}}((x,y)^{\otimes 2}) = (x,-y)^{\otimes2}
\end{equation*}
for every $x \in X$ and $y \in Y$. Define
\begin{equation*}
    M:= \frac{\id_{\polten{2}{Z}} - S^{\otimes_{2,s}}}{2}.
\end{equation*}
Then, $\|M\| \leq 1$ and since $(S^{\otimes_{2,s}})^2 = \id_{\polten{2}{Z}}$, we have that 
\begin{equation*}
    M^2 = \frac{1}{4}(\id_{\polten{2}{Z}} - S^{\otimes_{2,s}})^2 = \frac{1}{4} (\id_{\polten{2}{Z}} - 2 S^{\otimes_{2,s}} + (S^{\otimes_{2,s}})^2) = M.
\end{equation*}
So, $M$ is a projection and, in particular, $\|M\| \geq 1$. We now identify its range. For every $x \in X$ and $y \in Y$, we have that 
\begin{equation*}
    M((x,y)^{\otimes 2}) = \frac{1}{2} \left[ (x,y)^{\otimes 2} - (x,-y)^{\otimes 2} \right].
\end{equation*}
Using that $(x,y) = (x,0) + (0,y)$ and $(x,-y) = (x,0) - (0,y)$, we obtain that $(x,y)^{\otimes 2} = (x,0)^{\otimes 2} + 2(x,0) \vee (0,y) + (0,y)^{\otimes 2}$ and $(x,-y)^{\otimes 2} = (x,0)^{\otimes 2} - 2(x,0) \vee (0,y) + (0,y)^{\otimes 2}$. Therefore,
\begin{equation} \label{symmetric-eq2}
    M((x,y)^{\otimes 2}) = 2(x,0) \vee (0,y) = J(x \otimes y).
\end{equation}
Since the algebraic symmetric tensor product is spanned by tensors of the form $v^{\otimes 2}$, we get that $M(\polten{2}{Z}) \subseteq J(X \pten Y)$ and (\ref{symmetric-eq2}) gives $M(\polten{2}{Z}) = J(X \pten Y)$.
\end{proof}

Define $u_s := J(u)$. Since $J$ is an isometry, we get that 
\begin{equation*}
    \|u_s\|_{\pi_s} = \|Ju\|_{\pi_s} = \|u\|_{\pi} = 1.
\end{equation*}
Now, let us define 
\begin{equation*}
\Psi := \varphi \circ J^{-1} \circ M 
\end{equation*}
where $\varphi$ is defined in the proof of Lemma \ref{lemma2}. Observe that $\Psi: \polten{2}{Z} \rightarrow \R$. Moreover, since $u_s$ belongs to the range of $M$, it follows that $M(u_s) = u_s$ and, therefore,
\begin{equation*}
    \Psi(u_s) = \varphi(J^{-1}(M(u_s))) = \varphi(J^{-1}(Ju)) = \varphi(u) = 1.
\end{equation*}
Furthermore, we have that 
\begin{equation*} 
    \{ v \in B_{\polten{2}{Z}}: \Psi(v) = 1 \} = B_{\polten{2}{Z}} \cap M^{-1}(u_s).
\end{equation*}
Indeed, recall that $\{w \in B_{X \pten Y}: \varphi(w) = 1\} = \{u\}$. Take $v \in B_{\polten{2}{Z}}$ such that $\Psi(v) = 1$. By definition, $\varphi(J^{-1}(Mv)) = 1$. Since $M$ is contractive and $J^{-1}$ is an isometry on the range of $M$, then 
\begin{equation*}
    \|J^{-1}(Mv)\|_{\pi} = \|Mv\|_{\pi_s} \leq \|v\|_{\pi_s} \leq 1.
\end{equation*}
This means that $J^{-1} (Mv) \in B_{X \pten Y}$. Together with $\varphi(J^{-1}(Mv)) = 1$, this shows that $J^{-1}(Mv) = u$. Therefore, $Mv = Ju = u_s$ and $v \in M^{-1}(u_s)$. This proves the first inclusion. Now, take $v \in B_{\polten{2}{Z}} \cap M^{-1}(u_s)$. Then, $Mv = u_s$ and so 
\begin{equation*}
\Psi(v) = \varphi(J^{-1}(Mv)) = \varphi(J^{-1}(u_s)) = \varphi(J^{-1}(Ju)) = \varphi(u) = 1.
\end{equation*}

We need one last result before proving Theorem \ref{theorem:symmetric-main}.

\begin{lemma} \label{symmetric1} For every $v \in B_{\polten{2}{Z}}$ with $Mv = u_s$, we have $v = u_s$. In particular,
\begin{equation*} 
    \{ v \in B_{\polten{2}{Z}}: \Psi(v) = 1 \} = \{u_s\}.
\end{equation*}
\end{lemma}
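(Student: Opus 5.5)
The plan is to show that any $v\in B_{\polten{2}{Z}}$ with $Mv=u_s$ has vanishing ``diagonal part'' $Nv$, where $N:=\tfrac12(\id+S^{\otimes_{2,s}})=\id-M$. Since $v=Mv+Nv=u_s+Nv$, this gives $v=u_s$. Note that $\|u_s\|_{\pi_s}=1$ and $M$ is contractive, so $\|v\|_{\pi_s}=1$. For $a=(x,y)\in Z$ we have
\begin{equation*}
N(a^{\otimes 2})=(x,0)^{\otimes 2}+(0,y)^{\otimes 2},
\end{equation*}
so $Nv$ is the part of $v$ in $\polten{2}{X}\oplus\polten{2}{Y}$, embedded via $x\mapsto(x,0)$ and $y\mapsto(0,y)$.

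\textbf{Step 1: almost optimal representations and where their mass lies.} Take representations $v=\lim_m\sum_k\lambda_{k}^{(m)}(a_k^{(m)})^{\otimes2}$ with $\sum_k|\lambda_k^{(m)}|\le1+1/m$ and $\|a^{(m)}_k\|_\infty=1$. Write $a_k=(x_k,y_k)$. By the previous paragraph, $\Psi(v)=1$, and
\begin{equation*}
\Psi\big(a_k^{\otimes2}\big)=\varphi(x_k\otimes y_k)=y_k(Ax_k)\le\|x_k\|\,\|y_k\|\le1 .
\end{equation*}
Hence the $|\lambda|$-mass concentrates on indices with $\operatorname{sign}(\lambda_k)\,y_k(Ax_k)\approx1$. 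Such indices force $\|x_k\|\approx1$, $\|y_k\|\approx1$ and $\|Ax_k\|\approx1$. I would encode each stage as a signed discrete measure $\mu_m=\sum_k\lambda_k\delta_{a_k}$ on the weak$^*$-compact set $B_{X}\times B_{Y^{**}}$; here $X$ is finite dimensional. Then I pass to a weak$^*$ cluster point $\mu$ of total variation at most $1$. Using Fact~\ref{fact3} and Lemma~\ref{lemma1} (the only $z\in B_{Y^{**}}$ with $z(\pm f_j)=1$ is $\pm z_j$), I expect $\mu^+$ to be supported on $\{\pm(x_j,z_j)\}$ and $\mu^-$ on $\{\pm(x_j,-z_j)\}$, $j=1,2,3$.

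\textbf{Step 2: identify $v$ in the bidual.} I would compute $\langle P,v\rangle$ for continuous $2$-homogeneous polynomials $P$ on $Z$ via their Aron--Berner extensions to $Z^{**}=X\oplus_\infty Y^{**}$. The goal is to obtain, viewed inside $\bigotimes^{2,s}Z^{**}$,
\begin{equation*}
v=\sum_{j=1}^3\Big(e_j\,2(x_j,0)\vee(0,z_j)+d_j\big[(x_j,0)^{\otimes2}+(0,z_j)^{\otimes2}\big]\Big),
\end{equation*}
with $e_j,d_j\in\R$ and $\sum_j(|e_j|+|d_j|)\le1$. Applying $M$ and comparing with $u_s=J(u)$ should give $e_j=1/3$ by Fact~\ref{fact1}. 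Lemma~\ref{lemma2} is used here with the exposed face $\mathbf S$ in $X\pten Y^{**}$.

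\textbf{Step 3: $d_j=0$ from membership in $Z$-tensors.} The $Y$-diagonal part $\sum_j d_j\,z_j\otimes z_j$ must come from $\polten{2}{Y}$, i.e.\ from $c_0$-tensors. So it must pair to zero against the functionals $e_k^*\otimes\ell$, where $\ell$ is a Banach limit in $\ell_\infty^*$ vanishing on $c_0$. Since each $z_j$ has tail identically $1$, this gives $\sum_j d_j z_j(k)=0$ for $k=1,2$, and pairing with $\ell\otimes\ell$ gives $\sum_jd_j=0$. Written out, these are
\begin{equation*}
d_1-\tfrac12d_2-\tfrac12d_3=0,\qquad -\tfrac12d_1+d_2-\tfrac12d_3=0,\qquad d_1+d_2+d_3=0,
\end{equation*}
whose only solution is $d_1=d_2=d_3=0$. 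Hence $Nv=0$, so $v=u_s$, and the ``in particular'' statement follows from the identity $\{\Psi=1\}\cap B=B\cap M^{-1}(u_s)$ established before the lemma.

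The main obstacle is making Step~2 rigorous. One must justify that the weak$^*$ limit of the discrete measures represents $v$, in the sense that $\langle P,v\rangle=\int \bar P\,d\mu$ for the Aron--Berner extension $\bar P$. The difficulty is that $\bar P$ is only separately weak$^*$ continuous, not jointly. I would first try to avoid this by testing only against the finitely many specific polynomials that matter: $\Psi$, polynomials on $X$ (finite dimensional, hence harmless), $e_k^*\otimes e_l^*$, and the mixed forms built from $e_k^*$. Each of these is weak$^*$ continuous on bounded sets except for the pairing with $\ell$. That last pairing I would instead handle by an $\varepsilon$-estimate directly on the near-optimal representations, using that their mass sits near $(\pm x_j,\pm w_{j,n})$ for large $n$.
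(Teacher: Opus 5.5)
\textbf{Gap in Step 3: the Banach-limit equations.} Your Step 1, and the part of Step 2 that tests only weak$^*$-continuous polynomials, is the paper's argument. Your $d_j$ (positive minus negative mass at the atoms over $x_j$) is the paper's $\delta_j$. You correctly get $P_Xv=\sum_j d_j x_j^{\otimes 2}$ and $\langle R,v\rangle=\sum_j d_j\widetilde R(z_j)$ for finite-coordinate quadratics $R$ in the $Y$-variable.

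The gap is how you obtain the three linear equations. The polynomials $\ell^2$ and $e_k^*\ell$ vanish identically on $Z$, so they are not extensions of any polynomial on $Z$. Nothing you have established links their value on the limit object ($\sum_j d_j$, resp.\ $\sum_j d_j z_j(k)$) to $v$. The assertion $P_Yv=\sum_j d_j z_j^{\otimes 2}$ in $\polten{2}{Y^{**}}$ is exactly what is at stake, and your limit measure certifies it only against polynomials that are weak$^*$-continuous on bounded sets.

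The proposed $\eps$-estimate cannot close this. Every $y_k$ in any representation of $v$ lies in $c_0$, so $\ell(y_k)=0$ and the representation side is $0$ term by term, whatever the $d_j$ are. Moreover, Step 1 only puts the mass weak$^*$-near $\pm z_j$, not norm-near any $w_{j,n}$; and in any case $\ell(w_{j,n})=0\neq 1=\ell(z_j)$. Weak$^*$ proximity carries no information about $\ell$.

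\textbf{The fix.} Keep the test polynomials weak$^*$-continuous and take the limit outside. For each fixed $k\geq 3$, the polynomials $(e_k^*)^2$, $e_1^*e_k^*$, $e_2^*e_k^*$ (acting on the $Y$-coordinate of $Z$) are finite-coordinate. Your limit measure therefore gives exactly, independently of $k$, $\langle (e_k^*)^2,v\rangle=d_1+d_2+d_3$, $\langle e_1^*e_k^*,v\rangle=d_1-\tfrac12 d_2-\tfrac12 d_3$ and $\langle e_2^*e_k^*,v\rangle=-\tfrac12 d_1+d_2-\tfrac12 d_3$. The left-hand sides tend to $0$ as $k\to\infty$: these polynomials are uniformly bounded and tend to $0$ pointwise on $Z$, hence weak$^*$ on $\polten{2}{Z}$. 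This yields your system, so $d_j=0$. To conclude $P_Yv=0$ you then need that finite-coordinate quadratics separate points of $\polten{2}{Y}$; the basis projections, uniformly bounded on $Y$, give this. That replaces your full bidual identification.

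If you want to keep Banach limits, you must prove that these test polynomials, together with those coming from $X$, separate points of $\polten{2}{X\oplus_\infty\ell_\infty}$. This is true, via the approximation property of $\ell_\infty=C(\beta\N)$ and density of $\N$ in $\beta\N$, but your plan neither states nor proves it.

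\textbf{A wrong bound.} Your claim $\sum_j(|e_j|+|d_j|)\leq 1$ is false. An atom of weight $c$ contributes $c$ to both $|e_j|$ and $|d_j|$, so you only get $\sum_j e_j\leq 1$ and $|d_j|\leq e_j$. Combined with $e_j=1/3$, your bound would give $d_j=0$ for free, making Step 3 unnecessary; that should have been a warning sign.
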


\begin{proof} Let $P_X(x,y) := (x,0)$ and $P_Y(x,y) := (0,y)$ for every $x \in X$ and $y \in Y$ be the coordinate projections. They induce operators on $\polten{2}{Z}$ which we denote by the same symbols: $P_X(z^{\otimes 2}) = (P_X z)^{\otimes 2}$ and $P_Y(z^{\otimes 2}) = (P_Yz)^{\otimes 2}$ for every $z \in Z$. In particular, if $z = (x,y) \in Z$, we have that 
\begin{equation*}
    P_X((x,y)^{\otimes 2}) = (x,0)^{\otimes 2} \ \ \ \mbox{and} \ \ \ P_Y((x,y)^{\otimes 2}) = (0,y)^{\otimes 2}.
\end{equation*}
We have proved in (\ref{symmetric-eq2}) that $M((x,y)^{\otimes 2}) = 2(x,0) \vee (0,y)$ for every $x \in X$ and $y \in Y$. So, 
\begin{equation*}
    (M + P_X + P_Y)((x,y)^{\otimes 2}) = (x,y)^{\otimes 2}
\end{equation*}
for every $(x,y) \in Z$. This shows that 
\begin{equation*}
\id_{\polten{2}{Z}} = M + P_X + P_Y. 
\end{equation*}
In particular, $v = u_s + P_X(v) + P_Y(v)$. All we need to do now is to prove that $P_X(v) = P_Y(v) = 0$ and that is what we do in the rest of the proof.

Since $M$ is contractive and $Mv = u_s$, we have that 
\begin{equation*}
    1 = \|u_s\|_{\pi_s} = \|Mv\|_{\pi_s} \leq \|v\|_{\pi_s} \leq 1
\end{equation*}
and then $\|v\|_{\pi_s} = 1$. So, 
\begin{equation*}
    \Psi(v) = \varphi(J^{-1}(Mv)) = \varphi(u) = 1.
\end{equation*}

Now consider $K:= \{-1,1\} \times B_X \times B_{Y^{**}}$, where $B_X$ has the norm topology and $B_{Y^{**}}$ the $w^*$-topology. Notice that $K$ is a compact metrizable set since $X$ is finite-dimensional and $Y^* = \ell_1$ is separable (and, therefore, $B_{Y^{**}}$ is $w^*$-metrizable). We need the following claim, which uses the bounded linear operator $A: X \rightarrow Y^*$ defined just before Fact \ref{fact3}.

\vspace{0.2cm}
\noindent
{\it Claim}: $\eps z(Ax) = 1$ for $(\eps, x, z) \in K$ if and only if $(\eps, x, z) = (\eps, s x_j, \eps s z_j)$ for some $j \in \{1,2,3\}$ and $s, \eps \in \{-1,1\}$.

Suppose first that $\eps z(Ax) = 1$ for some $(\eps, x, z) \in K$. Since $\eps \in \{-1,1\}$, $|z(Ax)| = 1$. As $z \in B_{Y^{**}}$, $x \in B_X$ and $\|A\| = 1$, we have that $\|Ax\| = 1$. By Fact \ref{fact3}, $A$ attains its norm only at $\pm x_1, \pm x_2, \pm x_3$. Then, there are $j \in \{1,2,3\}$, $s \in \{-1,1\}$ such that $x = s x_j$. Since $A(x_j) = f_j$, we obtain $Ax = s f_j$. Thus, $1 = \eps z(Ax) = \eps s z(f_j)$. Therefore, $(\eps s z)(f_j) = 1$. Since $\eps s z \in B_{Y^{**}}$ and $\{w \in B_{Y^{**}}: w(f_j) = 1\} = \{z_j\}$, we get that $\eps s z = z_j$, which means that $z = \eps s z_j$. Conversely, suppose that $x = s x_j$ and $z = \eps s z_j$ for some $j \in \{1,2,3\}$ and $s, \eps \in \{-1,1\}$. Then, $Ax = s f_j$ and therefore $\eps z(Ax) = \eps (\eps s z_j)(s f_j) = z_j(f_j) = 1$.

Let now $\mathcal{E}:= \{(\eps, s x_j, \eps s z_j): j=1,2,3, \ \mbox{with} \ s, \eps \in \{-1,1\} \}$. By the above claim, we have that 
\begin{equation*}
    \mathcal{E} = \{ (\eps, x, z) \in K: \eps z(Ax) = 1\} 
\end{equation*}
which is a finite set. Choose pairwise disjoint neighborhoods $U_{j, \eps, s}^n$ of $(\eps, s x_j, \eps s z_j)$ whose diameters tend to zero as $n \rightarrow \infty$ and set 
\begin{equation*}
    U_n := \bigcup_{j, \eps, s} U_{j, \eps, s}^n.
\end{equation*}
Since $(\eps, x, z) \mapsto \eps z(Ax)$ is continuous on the compact space $K$, the claim above gives 
\begin{equation*} 
    \max_{(\eps,x,z) \in K \setminus U_n} \eps z(Ax) < 1. 
\end{equation*}
For every $n \in \N$, choose a representation 
\begin{equation*}
    v = \sum_{k=1}^{\infty} c_{n,k} \eps_{n,k} (x_{n,k}, y_{n,k})^{\otimes 2}
\end{equation*}
where $c_{n,k} \geq 0$, $\eps_{n,k} \in \{-1,1\}$, $(x_{n,k}, y_{n,k}) \in B_Z$ such that 
\begin{equation} \label{symmetric-eq6}
    \sum_{k=1}^{\infty} c_{n,k} \leq 1 + \frac{1 - \max_{(\eps,x,z) \in K \setminus U_n} \eps z(Ax)}{n}.
\end{equation}
Since $\Psi(v) = 1$, we have that 
\begin{equation*}
    1 = \sum_{k=1}^{\infty} c_{n,k} \eps_{n,k} y_{n,k}(A x_{n,k})
\end{equation*}
and then 
\begin{equation} \label{symmetric-eq7}
\sum_{k=1}^{\infty} c_{n,k}[1 - \eps_{n,k} y_{n,k}(A x_{n,k})] = \sum_{k=1}^{\infty} c_{n,k} - 1.
\end{equation}
For $(\eps_{n,k}, x_{n,k}, y_{n,k}) \not\in U_n$, we have that 
\begin{equation*}
1 - \eps_{n,k} y_{n,k}(A x_{n,k}) \geq 1 - \max_{(\eps,x,z) \in K \setminus U_n} \eps z(Ax) 
\end{equation*}
and equations (\ref{symmetric-eq6}) and (\ref{symmetric-eq7}) give
\begin{equation} \label{symmetric-eq8}
\sum_{\{k: (\eps_{n,k}, x_{n,k}, y_{n,k}) \not\in U_n \}} c_{n,k} \leq \frac{1}{n}.
\end{equation}
Thus, asymptotically, all the coefficients are concentrated near the finite number of points (notice that there are exactly 12 points) of $\mathcal{E}$. For $j=1,2,3$, $\eps, s \in \{-1,1\}$, let us define 
\begin{equation*}
    \lambda_{n, j, \eps, s} := \sum_{\{k: (\eps_{n,k}, x_{n,k}, y_{n,k}) \in U_{j,\eps,s}^n \}} c_{n,k}.
\end{equation*}
This sequence is bounded and so, passing to a subsequence if necessary, we can suppose that $\lambda_{n, j, \eps, s} \longrightarrow \lambda_{j, \eps, s}$ for every $j, \eps, s$. Since all the neighborhoods shrink to $(\eps, s x_j, \eps s z_j)$, all the coefficients outside $U_n$ tend to zero by (\ref{symmetric-eq8}). Applying $P_X$ to the representation of $v$ gives 
\begin{equation} \label{symmetric-eq9} 
P_X v = \sum_{j=1}^3 \left( \sum_{\eps, s = \pm 1} \eps \lambda_{j, \eps, s} \right) x_j^{\otimes 2}
\end{equation}
(notice that $s$ disappears since $(s x_j)^{\otimes 2} = x_j^{\otimes 2}$). In order to simplify the notation a bit, let us write 
\begin{equation*}
    \delta_j := \sum_{\eps, s = \pm 1} \eps \lambda_{j, \eps, s}
\end{equation*}
for every $j=1,2,3$. So, using (\ref{symmetric-eq9}), we can write 
\begin{equation} \label{symmetric-eq9-1} 
    P_X v = \delta_1 x_1^{\otimes 2} + \delta_2 x_2^{\otimes 2} + \delta_3 x_3^{\otimes 2}.
\end{equation}
We need to prove that $\delta_1 = \delta_2 = \delta_3 = 0$. In order to do so, take any quadratic polynomial $R$ on $Y$ depending only on finitely many coordinates and let $\tilde{R}$ denote its extension to $Y^{**}$. We have that 
\begin{equation*}
    P_Y v = \sum_{k=1}^{\infty} c_{n,k} \eps_{n,k} y_{n,k}^{\otimes 2}
\end{equation*}
and, therefore,
\begin{equation*}
    R(P_Y v) = \sum_{k=1}^{\infty} c_{n,k} \eps_{n,k} R(y_{n,k}).
\end{equation*}
Inside the neighborhoods $U_{j, \eps, s}^n$, we have that $y_{n,k} \stackrel{w^*}{\longrightarrow} \eps s z_j$ for $j=1,2,3$. The polynomial $R$ depends on only finitely many coordinates; therefore $R(y_{n,k}) \rightarrow \tilde{R}(\eps s z_j)$. Since $R$ is $2$-homogeneous, we get $\tilde{R}(\eps s z_j) = (\eps s)^2 \tilde{R}(z_j) = \tilde{R}(z_j)$ so, as before, we have 
\begin{equation*}
    R(P_Y v) = \sum_{j=1}^3 \sum_{\eps, s = \pm 1} \eps \lambda_{j, \eps, s} \tilde{R}(z_j).
\end{equation*}
This means we can write 
\begin{equation} \label{symmetric-eq10}
    R(P_Y v) = \delta_1 \tilde{R}(z_1) + \delta_2 \tilde{R}(z_2) + \delta_3 \tilde{R}(z_3).
\end{equation}
Now, choose, for $k \geq 3$, $R_k(y) = e_k^*(y)^2$ for $y \in c_0$. Since $y \in c_0$, $y(k) \rightarrow 0$ as $k \rightarrow \infty$ and then $R_k(y) \rightarrow 0$ as $k \rightarrow \infty$. As $(R_k)$ is uniformly bounded, it follows that $R_k(P_Y v) \rightarrow 0$ as $k \rightarrow \infty$. However, for $j=1,2,3$ and $k \geq 3$, $z_j(k) = 1$, we have that $\tilde{R}_k(z_j) = 1$. By (\ref{symmetric-eq10}), 
\begin{equation*}
    R_k(P_Y v) = \delta_1 + \delta_2 + \delta_3 
\end{equation*}
and since the left side goes to zero, we get that 
\begin{equation} \label{symmetric-eq11}
    \delta_1 + \delta_2 + \delta_3 = 0. 
\end{equation}
Next, take $R_{1,k}(y) := e_1^*(y) e_k^*(y)$ for $y \in c_0$. We know that $z_1(1) = 1$, $z_2(1) = z_3(1) = -1/2$ and $z_j(k) = 1$ for every $k \geq 3$. Therefore, $\tilde{R}_{1,k} (z_1) = 1$ while 
\begin{equation*}
    \tilde{R}_{1,k}(z_2) = \tilde{R}_{1,k}(z_3) = - \frac{1}{2}.
\end{equation*}
Again $R_{1,k}(P_Yv) \rightarrow 0$ as $k \rightarrow \infty$ and (\ref{symmetric-eq10}) gives 
\begin{equation} \label{symmetric-eq12}
\delta_1 - \frac{1}{2} \delta_2 - \frac{1}{2} \delta_3 = 0.
\end{equation}
Finally, taking $R_{2,k}(y) := e_2^*(y) e_k^*(y)$ for $y \in c_0$, we obtain
\begin{equation} \label{symmetric-eq13}
-\frac{1}{2} \delta_1 + \delta_2 - \frac{1}{2} \delta_3 = 0.
\end{equation} 
From (\ref{symmetric-eq11}), (\ref{symmetric-eq12}) and (\ref{symmetric-eq13}), we get that $\delta_1 = \delta_2 = \delta_3 = 0$. This shows that $P_X v = 0$ by (\ref{symmetric-eq9-1}). On the other hand, from (\ref{symmetric-eq10}) we have that $R(P_Y v) = 0$ for every finite-coordinate quadratic polynomial. These polynomials separate the points of $\polten{2}{Y}$, thus $P_Y v = 0$.
\end{proof}

Having all of this in mind, the proof of Theorem \ref{theorem:symmetric-main} is straightforward.

\begin{proof}[Proof of Theorem \ref{theorem:symmetric-main}] By Lemma \ref{symmetric1} $\Psi$ exposes $u_s$, therefore $u_s \in \Exp(B_{\polten{2}{Z}})$. Finally, suppose that $u_s = \pm w^{\otimes 2}$ for some $w = (x,y) \in Z$. Since $u_s \in M(\polten{2}{Z})$, we have that $u_s = Mu_s = \pm M((x,y)^{\otimes 2}) = \pm J(x \otimes y)$. Since $J$ is injective and $u_s = J(u)$, it follows that $u = \pm x \otimes y$, a contradiction.
\end{proof}


\section{Higher-order tensor products}\label{sec:higher-order}

The proof of Theorem \ref{theorem:higher-order-main} below is the main goal of this section, which extend both Theorem \ref{theorem:full-projective-main} and Theorem \ref{theorem:symmetric-main} from Sections \ref{sec:full tensor} and \ref{section:symmetric}, respectively. Let us notice that we have chosen to present the quadratic case first because the main ideas there are (much) more transparent than the proof of Theorem \ref{theorem:higher-order-main}. The passage to higher degrees is essentially a technical adaptation of the same argument and it might obscure the main mechanism.  

\begin{introthm}\label{theorem:higher-order-main} Let $N \geq 2$ be given. The following statements hold true.
\begin{itemize}
    \item[(a)] There exist real Banach spaces $X_1,\ldots,X_N$ such that the unit ball of $\multiten{N} X_i$ contains a non-elementary exposed point. 
    
    \item[(b)] There exists a real Banach space $F_N$ such that $B_{\polten{N}{F_N}}$ contains an exposed point which is not of the form $\pm z^{\otimes N}$.
    \end{itemize}
\end{introthm}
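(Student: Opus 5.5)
For (a) I will pad the two-factor example with one-dimensional factors. Take $X_1 := X$, $X_2 := Y$ and $X_3 = \cdots = X_N := \R$. Then $X \pten Y \pten \R \pten \cdots \pten \R$ is isometrically isomorphic to $X \pten Y$ via $x \otimes y \otimes t_3 \otimes \cdots \otimes t_N \mapsto t_3\cdots t_N\, x \otimes y$. Under this isometry the tensor $u \otimes 1 \otimes \cdots \otimes 1$ corresponds to $u$. By Theorem \ref{theorem:full-projective-main} it is therefore exposed, and it is not elementary because its image $u$ has rank two. If one prefers all factors infinite-dimensional, one can instead take $X_i := \R \oplus_1 W_i$. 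Each $\R$ is then $1$-complemented, and one composes the exposing functional with the product of the contractive projections. One must still check that no other norm-one element is mapped onto $u$, which is the same kind of issue treated below. I will present the trivial padding as the proof of (a).

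For (b) I will mimic Section \ref{section:symmetric}. Set $F_N := X \oplus_\infty Y \oplus_\infty \R^{N-2}$, writing $e$ for the last coordinates. Consider the multilinear map $x \otimes y \otimes t_3\otimes\cdots\otimes t_N \mapsto N!\,(x,0,0)\vee(0,y,0)\vee(0,0,t_3 e_3)\vee\cdots\vee(0,0,t_N e_N)$, where $e_3,\dots,e_N$ are the unit vectors of $\ell_\infty^{N-2}$. This defines $J\colon X\pten Y \to \polten{N}{F_N}$.

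First I show that $J$ is contractive, using a polarization formula. Averaging $\varepsilon_1\cdots\varepsilon_N(\sum_i \varepsilon_i w_i)^{\otimes N}$ over signs gives $N!\,w_1\vee\cdots\vee w_N$ up to the normalization factor $2^{-N}\cdot 2^N$. After rescaling the $w_i$ to have equal norms, every vector $\sum_i\varepsilon_i w_i$ has $\infty$-norm equal to the common norm, because the $w_i$ live in disjoint blocks. This gives $\|J(x\otimes y)\|\le \|x\|\|y\|$.

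The reverse inequality comes from lifting a bilinear form $B$ to the polynomial $Q_B(x,y,t) := B(x,y)\,t_3\cdots t_N$. It satisfies $\|Q_B\| = \|B\|$ and $\langle Q_B, J(x\otimes y)\rangle = B(x,y)$.

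For the projection, I use the group $\{\pm1\}^{N}$ acting by sign changes on the $N$ blocks (treating each $\R$ coordinate as its own block). Let $M$ be the average of $\chi(\sigma)\,\sigma^{\otimes N}$, where $\chi$ is the product character. $M$ is contractive, and it kills every monomial $w_{i_1}\vee\cdots\vee w_{i_N}$ except those using each block exactly once. Its range is therefore $J(X\pten Y)$.

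Define $\Psi := \varphi\circ J^{-1}\circ M$ and $u_s := J(u)$. Exactly as in Section \ref{section:symmetric}, any $v$ in the unit ball with $\Psi(v) = 1$ satisfies $Mv = u_s$. It remains to show $v = u_s$. The complement $(\id - M)v$ is a sum of components indexed by the nontrivial block multidegrees $(a_X, a_Y, a_3,\dots,a_N)$ with sum $N$.

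Next I rerun the concentration argument of Lemma \ref{symmetric1}. Take near-optimal representations $v = \sum_k c_{n,k}\varepsilon_{n,k}(x_{n,k},y_{n,k},t_{n,k})^{\otimes N}$. The function $\varepsilon\, t_3\cdots t_N\, z(Ax)$ equals $1$ on $\{-1,1\}\times B_X\times B_{Y^{**}}\times[-1,1]^{N-2}$ exactly at the finitely many points $x = sx_j$, $|t_i| = 1$, $z = \pm z_j$ with the appropriate sign. Hence the mass concentrates there. In the limit, each component of $v$ of multidegree $(a_X,a_Y,a)$ becomes a combination $\sum_j \delta_j^{(\cdot)}\, x_j^{a_X}\vee \tilde z_j^{a_Y}\vee \tau$, where $\tilde z_j$ is the $w^*$-limit and the $\delta$'s are signed combinations of limit weights.

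To kill the components, I test them against polynomials in finitely many coordinates on the $Y$-block combined with tail coordinates $e_k^*$, $k\to\infty$, as in \eqref{symmetric-eq11}--\eqref{symmetric-eq13}. The point is that $v$ lies in the predual built on $c_0$ while the $z_j$ have tails equal to $1$. For components with $a_Y\ge1$ this yields linear systems in the $\delta$'s with matrix $(z_j(i))$, and that matrix is invertible. For components with $a_Y = 0$, the corresponding weights $\delta$ must equal the ones appearing with positive $Y$-degree, since they come from the same limit masses.

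This bookkeeping is the main obstacle. The limit masses are shared across all multidegrees of the same $N$-th power, so one must show that the vanishing forced by the $c_0$-tail tests propagates to every component, including the pure-$X$ and pure-$\R$ parts. I would first try to reduce to the previous case by restricting to the degrees in the $\R$-blocks. With $t$-signs fixed, all nonzero $\R$-coordinates have modulus $1$ in the limit. Then each component, after contracting the $\R$-block factors against $\pm1$, reduces to an $(X,Y)$-bidegree component. For those I would check that the $Y$-degree-zero weights $\delta_j$ are forced to vanish by the $a_Y\ge1$ equations via the identity $\sum_j \delta_j = 0$ together with the two coordinate equations. Finally, $u_s$ is not $\pm w^{\otimes N}$: $M(w^{\otimes N})$ is $J$ of an elementary tensor, while $u$ is not elementary.
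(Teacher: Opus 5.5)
Your part (a) is correct and slightly simpler than the paper's. Since $E\pten\R\cong E$ canonically and elementary tensors correspond to elementary tensors, $u\otimes1\otimes\cdots\otimes1$ is exposed and non-elementary directly by Theorem~\ref{theorem:full-projective-main}. The paper instead pads with copies of $\ell_2^2$ and uses $u\otimes e_1^{\otimes(N-2)}$, which needs a small extra argument.

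In (b), several ingredients match the paper: your $F_N$, $J$, $M$ and $\Psi$, the concentration of mass at the points where $\eps\,t_3\cdots t_N\,z(Ax)=1$, and the final check that $u_s\neq\pm w^{\otimes N}$. The gap is the step you call bookkeeping. It is the heart of Lemma~\ref{lemma:higher-fiber}, and your sketch of it does not work.

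First, components with $Y$-degree $1$ do not yield an invertible system. The $Y$-part of the test polynomial is then linear, and the only tests whose pairing with $v$ is known to vanish are tail tests like $e_k^*$. These give the single equation $\sum_j\gamma_j=0$; nothing forces the $e_1^*$- and $e_2^*$-tests to vanish. In fact the $(1,\ldots,1)$-component is $u_s\neq0$, has $Y$-degree $1$, and has nonzero weights (each equal to $1/3$). So no argument that only uses $a_Y\geq1$ can be right. Three vanishing equations require $Y$-degree $q\geq2$, via the tests $(e_k^*)^q$, $e_1^*(e_k^*)^{q-1}$ and $e_2^*(e_k^*)^{q-1}$.

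Second, weights attached to different multidegrees are not equal merely because they come from the same limit masses. Let $m_{j,\eps,s,\tau}$ be the limit mass at $(\eps,sx_j,\eps s\tau_1\cdots\tau_m z_j,\tau)$. The weight of the component of multidegree $\beta=(r,q,d_1,\ldots,d_m)$ is
$\delta_j^\beta=\sum_{\eps,s,\tau}\eps^{q+1}s^{r+q}\prod_\ell\tau_\ell^{q+d_\ell}\,m_{j,\eps,s,\tau}$,
and the signs change with $\beta$.

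Your proposed reduction to the quadratic case also fails. After contracting the $\R$-blocks you meet $(X,Y)$-bidegrees such as $(1,0)$, $(0,1)$ or $(0,0)$, for example the multidegrees $(1,0,2)$, $(0,1,2)$ and $(0,0,3)$ when $N=3$. Lemma~\ref{symmetric1} never treats these, and no three-equation tail test is available for them. In the quadratic case the pure-$X$ weights vanish because bidegrees $(2,0)$ and $(0,2)$ carry the same signed weights, not because of equations coming from $a_Y\geq1$.

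The missing idea is that $\delta_j^\beta$ depends only on $\beta$ modulo $2$. Take $\alpha\neq(1,\ldots,1)$. Its $N$ entries sum to $N$, so some entry is at least $2$. If the $Y$-entry $b$ is less than $2$, move $2$ units from such an entry into the $Y$-slot. This leaves $\delta_j^\alpha$ unchanged and produces $\alpha'$ with $q\geq2$. Now test with $x^*(x)^{a'}R(y)\prod_\ell t_\ell^{c'_\ell}$, where $x^*(x_j)\neq0$ for all $j$ and $R$ runs over the three tail polynomials. This yields $\delta_j^{\alpha'}x^*(x_j)^{a'}=0$, hence $\delta_j^\alpha=0$ and $P_\alpha v=0$. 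Without this parity transfer, or an equivalent device, (b) is not proved.
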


Once the example of Section~\ref{sec:full tensor} is available, it is relatively straightforward
to obtain analogous examples in projective tensor products with an
arbitrary number of factors. Indeed, by adding copies of $\ell_2^2$ and
considering $u\otimes e_1^{\otimes N-2}$, one obtains a non-elementary exposed tensor in $\multiten{N} X_i$ for every $N\geq3$, where $X_1=X$, $X_2=Y$ and $X_j=\ell_2^2$ for $j\geq3$. This proves item (a) in Theorem \ref{theorem:higher-order-main}.

The symmetric case is more delicate, since one needs to preserve both
the norm and the exposing structure when passing to higher degrees.
Nevertheless, the construction underlying Theorem \ref{theorem:symmetric-main}, based on ideas of Ansemil and Floret, can be adapted to obtain the following result as follows. 


We retain the notation from Sections~\ref{sec:full tensor} and
\ref{section:symmetric}. Fix $N\geq3$ and set
\begin{equation*}
F_N
:=
X\oplus_\infty Y\oplus_\infty\ell_\infty^{N-2}.
\end{equation*}
Let
$e_1,\ldots,e_{N-2}$ denote the canonical basis of the last component. Let us define now
$
J_N:X\otimes Y\longrightarrow \bigotimes^{N,s}F_N
$
by
\begin{equation*}
J_N(x\otimes y)
=
N!\,
(x,0,0)\vee(0,y,0)
\vee e_1\vee\cdots\vee e_{N-2}
\end{equation*}
for every $x \in X$ and $y \in Y$. We have the following result, which extends Fact \ref{symmetric:fact1}.

\begin{fact}\label{fact:JN}
The map $J_N$ extends to an isometric embedding
\begin{equation*}
J_N:X\pten Y\longrightarrow\polten{N}{F_N}
\end{equation*}
and its range is $1$-complemented.
\end{fact}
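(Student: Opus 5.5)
We follow the proof of Fact \ref{symmetric:fact1}, replacing the polarization identity for two vectors by the $N$-fold polarization formula. Write $a=(x,0,0)$, $b=(0,y,0)$, $c_i=(0,0,e_i)$. The key identity is
\begin{equation*}
N!\, w_1\vee\cdots\vee w_N=\frac{1}{2^N}\sum_{\eps\in\{-1,1\}^N}\eps_1\cdots\eps_N\,(\eps_1w_1+\cdots+\eps_Nw_N)^{\otimes N}.
\end{equation*}
We apply it with $w_1=t a$, $w_2=t^{-1}b$, $w_3=c_1,\ldots,w_N=c_{N-2}$, where $t$ is chosen so that $\|ta\|=\|t^{-1}b\|=\sqrt{\|x\|\|y\|}$. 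We first normalize $\|x\|\|y\|=1$ by homogeneity. Then every vector $\eps_1w_1+\cdots+\eps_Nw_N$ has $\infty$-norm equal to $1$ in $F_N$, since the $\oplus_\infty$-sum takes the maximum over the components and each $c_i$ has norm $1$ in $\ell_\infty^{N-2}$. The polarization formula has $2^N$ terms with weight $2^{-N}$, so $\|J_N(x\otimes y)\|_{\pi_s}\le \|x\|\|y\|$. By the triangle inequality this gives $\|J_Nw\|_{\pi_s}\le\|w\|_\pi$ on $X\otimes Y$.

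For the reverse inequality we use duality. Given $B\in\mathcal B(X\times Y)$, define the $N$-homogeneous polynomial
\begin{equation*}
Q_B(x,y,\xi):=B(x,y)\,\xi_1\cdots\xi_{N-2},\qquad (x,y,\xi)\in F_N.
\end{equation*}
Because the norm on $F_N$ is the max norm, $\|Q_B\|\le\|B\|$. Its associated symmetric $N$-linear form $\check Q_B$ is the symmetrization of $(w_1,\ldots,w_N)\mapsto B(w_1^X,w_2^Y)\,w_3^{(1)}\cdots w_N^{(N-2)}$, where $w^X$, $w^Y$ and $w^{(i)}$ denote the components of $w$. Evaluating at $(a,b,c_1,\ldots,c_{N-2})$, only the permutations that put $a$ in the $X$-slot, $b$ in the $Y$-slot and each $c_i$ in the slot $e_i$ survive, and there is exactly one such permutation. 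Hence $N!\,\check Q_B(a,b,c_1,\ldots,c_{N-2})=B(x,y)$, that is, $\langle Q_B,J_N(x\otimes y)\rangle=B(x,y)$. Taking the supremum over $\|B\|\le1$ gives $\|w\|_\pi\le\|J_Nw\|_{\pi_s}$. So $J_N$ is an isometry and extends to the completion.

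For the $1$-complemented range we replace the single sign-change isometry $S$ by a group of commuting sign-change isometries of $F_N$. Let $S_0(x,y,\xi)=(x,-y,\xi)$ and $S_i(x,y,\xi)=(x,y,\xi_1,\ldots,-\xi_i,\ldots)$. These, together with $T(x,y,\xi)=(-x,y,\xi)$, generate a group $G\cong\{\pm1\}^{N}$ of isometries of $F_N$, with characters $\chi$ given by the sign of the product of the flipped components. Define
\begin{equation*}
M_N:=\frac{1}{|G|}\sum_{g\in G}\chi(g)\,g^{\otimes_{N,s}}.
\end{equation*}
This is a contractive projection onto the joint eigenspace of $G$ on which each flipped component contributes its sign. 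Expanding $(x,y,\xi)^{\otimes N}$ multinomially, the surviving terms in that eigenspace are those in which $X$ and $Y$ and each $e_i$ appear an odd number of times. In total there are $N$ slots and $N$ "odd" components.

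The step we expect to be the main obstacle is checking that these surviving terms are exactly the multiples of $a\vee b\vee c_1\vee\cdots\vee c_{N-2}$. A component appearing $3$ or more times would have to be compensated by another component appearing $0$ times, which is even and so forbidden. Hence every component appears exactly once, and the surviving term is $N!\,\xi_1\cdots\xi_{N-2}\,(x,0,0)\vee(0,y,0)\vee c_1\vee\cdots\vee c_{N-2}=\xi_1\cdots\xi_{N-2}\,J_N(x\otimes y)$. So $M_N((x,y,\xi)^{\otimes N})=\xi_1\cdots\xi_{N-2}\,J_N(x\otimes y)$. Since tensors of the form $v^{\otimes N}$ span the algebraic symmetric tensor product, the range of $M_N$ equals $J_N(X\pten Y)$, and $M_N$ is idempotent and restricts to the identity there.
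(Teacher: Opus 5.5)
Your proposal is correct and follows essentially the same route as the paper. You use the $N$-fold polarization formula in the $\oplus_\infty$-sum for $\|J_N w\|_{\pi_s}\le\|w\|_\pi$ (normalizing $\|x\|\|y\|=1$ instead of rescaling all $N$ vectors to norm $(\|x\|\|y\|)^{1/N}$, which makes no difference), duality against $B(x,y)\xi_1\cdots\xi_{N-2}$ for the reverse inequality, and the sign-character average $M_N$ for the contractive projection; your parity argument showing that only the multi-index $(1,\ldots,1)$ survives fills in the detail the paper leaves implicit when it asserts that the range of $M_N$ is $J_N(X\pten Y)$.
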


\begin{proof}
Let $B\in\mathcal B(X\times Y)$ and consider the $N$-homogeneous
polynomial
\begin{equation*}
P_B(x,y,t_1,\ldots,t_{N-2})
=
B(x,y)t_1\cdots t_{N-2}.
\end{equation*}
Clearly,
$
\|P_B\|=\|B\|,
$
and
$
\langle P_B,J_N(x\otimes y)\rangle
=
B(x,y).
$
By duality, we have that $\|J_N(w)\|_{\pi_s} \geq \|w\|_{\pi}$ for every $w \in X \otimes Y$. For the reverse inequality, let $x\neq0$ and $y\neq0$, and put
$
r :=(\|x\|\|y\|)^{1/N}.
$
Set also
$
\xi_1 :=\frac{r}{\|x\|}(x,0,0),
\xi_2 :=\frac{r}{\|y\|}(0,y,0),
$
and
$\xi_{k+2}=r e_k$ for every $k=1,\ldots,N-2$. Then, we have that
$
\|\xi_1\|=\cdots=\|\xi_N\|=r
$
and
\begin{equation*}
J_N(x\otimes y)
=
N!\,\xi_1\vee\cdots\vee\xi_N.
\end{equation*}
The polarization formula gives $\|J_N(x\otimes y)\|_{\pi_s}
\leq
r^N
=
\|x\|\|y\|$. Thus $J_N$ is an isometry.

To identify its range, we follow the ideas from Fact \ref{symmetric:fact1} by changing independently the signs of the $N$ blocks given by
$
X,
Y,
\R e_1,\ldots,\R e_{N-2}.
$
Averaging the induced isometries on $\polten{N}{F_N}$ with the product
of the signs gives a contractive projection
\begin{equation*}
M_N:\polten{N}{F_N}\longrightarrow\polten{N}{F_N}.
\end{equation*}
That is, 
\[
M_N
=
\frac1{2^N}
\sum_{\varepsilon\in\{-1,1\}^{m+2}}
\varepsilon_X\varepsilon_Y
\varepsilon_1\cdots\varepsilon_m\,
S_\varepsilon^{\otimes_{N,s}},
\]
where $S_\varepsilon(x,y,t_1,\ldots,t_m)
=
(\varepsilon_Xx,\varepsilon_Yy,
\varepsilon_1t_1,\ldots,\varepsilon_mt_m)$. The range of $M_N$ is precisely the component 
$
(1,1,\ldots,1),
$
which coincides with $J_N(X\pten Y)$.
\end{proof}

We now define $u_N :=J_N(u)$. Since $J_N$ is an isometry,
$
\|u_N\|_{\pi_s}=1.
$ Let $\varphi\in(X\pten Y)^*$ be the functional $u$ (recall the proof of Lemma \ref{lemma2}) and define
$
\Psi_N
:=
\varphi\circ J_N^{-1}\circ M_N.
$
As in Section \ref{section:symmetric}, we have
$
\|\Psi_N\|=1,
\Psi_N(u_N)=1
$
 and $\Psi_N(v)=1$ implies that  $M_N(v)=u_N$.

Now we prove the extension of Lemma \ref{symmetric1}. That is, we have the following result.

\begin{lemma}\label{lemma:higher-fiber}
For every $v\in B_{\polten{N}{F_N}}$ with $M_N(v)=u_N$, we have
$v=u_N$. In particular,
\begin{equation*}
\left\{
v\in B_{\polten{N}{F_N}}:\Psi_N(v)=1
\right\}
=
\{u_N\}.
\end{equation*}
\end{lemma}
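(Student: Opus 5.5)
We mimic the proof of Lemma \ref{symmetric1}, with the single off-diagonal projection replaced by the full multi-index decomposition. For a multi-index $\alpha=(a_X,a_Y,a_1,\ldots,a_m)$ with $m=N-2$ and $|\alpha|=N$, let $P_\alpha$ be the projection obtained by averaging the induced isometries $S_\varepsilon^{\otimes_{N,s}}$ against the character $\varepsilon_X^{a_X}\varepsilon_Y^{a_Y}\prod\varepsilon_i^{a_i}$. This is not quite right for characters alone, since characters only detect parities. Instead we define $P_\alpha$ directly through the polarization identity: it picks out the part of $\xi^{\otimes N}$ that is homogeneous of degree $a_X$ in $x$, $a_Y$ in $y$ and $a_i$ in $t_i$. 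Then $\id=\sum_\alpha P_\alpha$ and $M_N=P_{(1,\ldots,1)}$. Since $M_Nv=u_N$, it suffices to show $P_\alpha v=0$ for every $\alpha\neq(1,\ldots,1)$. As in Lemma \ref{symmetric1}, we have $\|v\|_{\pi_s}=1$ and $\Psi_N(v)=1$.

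\textbf{Step 1 (concentration).} Using $M_N(\xi^{\otimes N})=t_1\cdots t_m J_N(x\otimes y)$ for $\xi=(x,y,t)$, we get $\Psi_N(\xi^{\otimes N})=t_1\cdots t_m\,y(Ax)$. Work on the compact metrizable set $K_N=\{-1,1\}\times B_X\times B_{Y^{**}}\times[-1,1]^m$ with the continuous function $\varepsilon\, t_1\cdots t_m\, z(Ax)$. By the Claim in Lemma \ref{symmetric1}, this function equals $1$ exactly on the finite set of points
\[
\bigl(\varepsilon,\ s x_j,\ \sigma z_j,\ t\bigr),\qquad t\in\{-1,1\}^m,\qquad \varepsilon\sigma s\textstyle\prod_i t_i=1 .
\]
We take near-optimal representations $v=\sum_k c_{n,k}\varepsilon_{n,k}\xi_{n,k}^{\otimes N}$ and shrinking disjoint neighbourhoods of these points. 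Exactly as in (\ref{symmetric-eq6})--(\ref{symmetric-eq8}), the mass outside the neighbourhoods tends to $0$. Passing to a subsequence, the masses in each neighbourhood converge to limits $\lambda_p$.

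\textbf{Step 2 (component formulas).} For each $\alpha$ and each test polynomial $Q(x)R(y)t^\alpha$, with $Q$ on $X$ and $R$ depending on finitely many coordinates of $Y$, we obtain
\[
\langle Q R t^\alpha, P_\alpha v\rangle=\sum_{j}\Bigl(\sum_{p\text{ over }j}\lambda_p\varepsilon_p s_p^{a_X}\sigma_p^{a_Y}\textstyle\prod t_{p,i}^{a_i}\Bigr)Q(x_j)\tilde R(z_j).
\]
The inner weight $w_{j,\alpha}$ depends only on $j$ and the parity vector of $\alpha$.

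\textbf{Step 3 (the $c_0$ constraint).} If $a_Y\geq1$, use $R$ of the form $e_k^*(y)\cdot R'(y)$ with $R'$ a monomial of degree $a_Y-1$ in the coordinates $1,2,k$, and let $k\to\infty$. The left-hand side tends to $0$ because the component lives over $c_0$. The right-hand side is a combination of $w_{j,\alpha}Q(x_j)$ with coefficient vectors built from $(1,z_j(1),z_j(2))$, and these three vectors are linearly independent. Varying $Q$, we obtain $w_{j,\alpha}=0$ for all $j$ whenever $a_Y\geq1$.

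\textbf{Step 4 (the main obstacle: components with $a_Y=0$).} Here there is no $c_0$ constraint, and we transfer the vanishing through parity. If $a_Y=0$ and $|\alpha|=N$, then some exponent is at least $2$, since otherwise $|\alpha|\leq N-1$. Moving $2$ from that exponent to $a_Y$ produces $\alpha'$ with the same parity vector and $a_Y'=2\geq1$. Hence $w_{j,\alpha}=w_{j,\alpha'}=0$ by Step 3.

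\textbf{Conclusion.} With all weights zero, every component $P_\alpha v$, $\alpha\neq(1,\ldots,1)$, annihilates the separating family of finite-coordinate test polynomials, so $P_\alpha v=0$. Therefore $v=M_Nv=u_N$.

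The delicate points are two. First, one must justify that $P_\alpha$ is contractive and that the test polynomials separate points, as in Lemma \ref{symmetric1}. Second, one must track carefully the bookkeeping of signs in Step 2, since that is what makes the weights depend only on parities.
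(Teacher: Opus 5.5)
Your architecture is the paper's: the decomposition $\id=\sum_\alpha P_\alpha$ with $M_N=P_{(1,\ldots,1)}$, concentration of near-optimal representations on the finite set where $\varepsilon\,t_1\cdots t_m\,z(Ax)=1$, sign weights that depend only on the parity vector of $\alpha$, the $c_0$-tail test polynomials, and the transfer of two units of degree into the $Y$-variable. The gap is that your Step 3 is only valid for $a_Y\geq 2$, while Step 4 only treats $a_Y=0$.

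Your coefficient vectors $(1,z_j(1),z_j(2))$ need $R'$ to have degree $a_Y-1\geq 1$, so that $R'$ can contain $e_1^*$ or $e_2^*$. When $a_Y=1$ the only choice is $R=e_k^*$. It equals $1$ at $z_1,z_2,z_3$, so you get the single relation $\sum_j w_{j,\alpha}Q(x_j)=0$. Varying $Q$ does not help when $a_X\leq 1$:
\begin{itemize}
\item for $a_X=0$ this is one equation in three unknowns;
\item for $a_X=1$ the identity $x_3=-x_1-x_2$ only yields $w_{1,\alpha}=w_{2,\alpha}=w_{3,\alpha}$.
\end{itemize}
No cleverer choice of $R$ can fix this. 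For $a_Y=1$ the $c_0$ condition amounts to $\sum_j w_{j,\alpha}Q(x_j)\,z_j\in c_0$, i.e.\ the vanishing of a single common tail. For instance, $(w_{1,\alpha},w_{2,\alpha},w_{3,\alpha})=(1,-1,0)$ is compatible with it when $a_X=0$, because $z_1-z_2\in c_0$. So components such as $\alpha=(0,1,2)$ for $N=3$, or $\alpha=(1,1,2,0)$ for $N=4$, are handled by neither step.

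The repair is what the paper does: use the parity transfer whenever $a_Y\leq 1$, not only when $a_Y=0$. Since $\alpha$ has exactly $N$ entries summing to $N$ and $\alpha\neq(1,\ldots,1)$, some entry is at least $2$. If $a_Y\leq 1$, that entry is not $a_Y$. Moving $2$ from it to $a_Y$ gives $\alpha'$ with the same parity vector and $a'_Y\geq 2$. Step 3 then applies to $\alpha'$ with $R\in\{(e_k^*)^{q},\,e_1^*(e_k^*)^{q-1},\,e_2^*(e_k^*)^{q-1}\}$ and $q=a'_Y$, giving $w_{j,\alpha}=w_{j,\alpha'}=0$.

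With Step 3 restricted to $a_Y\geq 2$ and Step 4 extended to $a_Y\in\{0,1\}$, your argument is complete and coincides with the paper's. Incidentally, you only need $P_\alpha$ to be bounded, not contractive.
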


\begin{proof}
Put $m=N-2$. For
$
w :=(x,y,t_1,\ldots,t_m)\in F_N,
$
let us write $w_X := (x,0,0)$, $w_Y:= (0,y,0)$ and $w_k:= t_k e_k$ for every $k=1,\ldots, m$. If $\alpha :=(a,b,c_1,\ldots,c_m)$ with $|\alpha|:=a+b+c_1+\cdots+c_m=N$, we write
\begin{equation*}
w^\alpha
=
w_X^{\otimes a}
\vee
w_Y^{\otimes b}
\vee
w_1^{\otimes c_1}
\vee\cdots\vee
w_m^{\otimes c_m}.
\end{equation*}
Then, we have that
\begin{equation*}
w^{\otimes N}
=
\sum_{|\alpha|=N}
\binom{N}{\alpha}w^\alpha \ \ \mbox{with} \ \ 
\binom{N}{\alpha}
=
\frac{N!}{a!b!c_1!\cdots c_m!}.
\end{equation*}
Let $P_\alpha$ denote the corresponding projection, that is,
\begin{equation*}
P_\alpha(w^{\otimes N})
=
\binom{N}{\alpha}w^\alpha.
\end{equation*}
Thus
\begin{equation*}
\id_{\polten{N}{F_N}}
=
\sum_{|\alpha|=N}P_\alpha,
\qquad
M_N=P_{(1,\ldots,1)}.
\end{equation*}
Let
$
v\in B_{\polten{N}{F_N}}
$ with $
M_N(v)=u_N.
$ We want to see that $v=u_N$.
Since
\begin{equation*}
v
=
u_N+
\sum_{\substack{|\alpha|=N\\
\alpha\neq(1,\ldots,1)}}
P_\alpha(v),
\end{equation*}
it remains to prove that
$
P_\alpha(v)=0
$ for $
\alpha\neq(1,\ldots,1)$. For every $n\in\N$, choose an almost optimal representation
\begin{equation*}
v
=
\sum_{k=1}^{\infty}
c_{n,k}\eps_{n,k}
w_{n,k}^{\otimes N},
\end{equation*}
where
$
c_{n,k}\geq0,
$
 $\eps_{n,k}\in\{-1,1\},
$
$
w_{n,k}\in B_{F_N}
$
and
\begin{equation*}
\sum_{k=1}^{\infty}c_{n,k}
\leq
1+\frac1n.
\end{equation*}
Write
$
w_{n,k}
=
(x_{n,k},y_{n,k},
t_{n,k,1},\ldots,t_{n,k,m}).$ Consider once again the set
\begin{equation*}
K
:=
\{-1,1\}
\times B_X
\times B_{Y^{**}}
\times[-1,1]^m
\end{equation*}
where $B_X$ has the norm topology and $B_{Y^{**}}$ the weak-star
topology. Here, instead of using $w^*$-neighborhoods as in the proof of Lemma \ref{symmetric1}, we use measures. We define the positive measure
\begin{equation*}
\mu_n
=
\sum_{k=1}^{\infty}
c_{n,k}
\delta_{
(\eps_{n,k},x_{n,k},y_{n,k},
t_{n,k,1},\ldots,t_{n,k,m})
}.
\end{equation*}
After passing to a subsequence,
$
\mu_n\xrightarrow{w^*}\mu
$
for some probability measure $\mu$ on $K$. Since $\Psi_N(v)=1$, if we define $h(\eps,x,z,\tau_1,\ldots,\tau_m)
:=
\eps\,z(Ax)\tau_1\cdots\tau_m$, then
\[
\int_K h\,d\mu_n
=
\sum_{k=1}^\infty
c_{n,k}\varepsilon_{n,k}\,
y_{n,k}(Ax_{n,k})
\prod_{\ell=1}^m t_{n,k,\ell}
=
\Psi_N(v)=1.
\]
Passing to the limit gives
\[
\int_K h\,d\mu=1.
\]
Since $h\leq1$ on $K$ and $\mu$ is a probability measure, it follows that
$h=1$ on $\supp\mu$. If $h(\eps,x,z,\tau)=1$, then necessarily
$|\tau_\ell|=1$ for every $\ell$ and $|z(Ax)|=1$. By Fact
\ref{fact3} and Lemma \ref{lemma1},
\(
x=sx_j
\) and $
z=\eps s\tau_1\cdots\tau_m z_j
$
for some $j\in\{1,2,3\}$ and $s\in\{-1,1\}$. Then, we have that
\begin{equation*}
\operatorname{supp}\mu
\subseteq
\left\{
\left(
\eps,
sx_j,
\eps s\tau_1\cdots\tau_m z_j,
\tau_1,\ldots,\tau_m
\right):
j=1,2,3,\
\eps,s,\tau_k\in\{-1,1\}
\right\}.
\end{equation*}
For $
j\in\{1,2,3\}
$ and $
\eps,s,\tau_1,\ldots,\tau_m\in\{-1,1\},
$
let us put
\begin{equation*}
m_{j,\eps,s,\tau}
=
\mu\left(
\left\{
\left(
\eps,
sx_j,
\eps s\tau_1\cdots\tau_m z_j,
\tau_1,\ldots,\tau_m
\right)
\right\}
\right).
\end{equation*}

Now, for every
$
\beta=(r,q,d_1,\ldots,d_m)
$ with $
|\beta|=N$
define
\begin{equation}\label{eq:delta-beta}
\delta_j^\beta
=
\sum_{\eps,s,\tau}
\eps^{q+1}s^{r+q}
\prod_{\ell=1}^m
\tau_\ell^{q+d_\ell}
m_{j,\eps,s,\tau},
\qquad
j=1,2,3.
\end{equation}
If $Q$ is an $N$-homogeneous polynomial having degrees
$r,q,d_1,\ldots,d_m$ in the corresponding variables and
depending only on finitely many coordinates of $Y$, let
$\widetilde Q$ denote its canonical extension to
\(
X\oplus_\infty Y^{**}\oplus_\infty\ell_\infty^m.
\)
Then
\begin{equation}\label{eq:measure-component}
\langle Q,v\rangle
=
\sum_{j=1}^3
\delta_j^\beta
\widetilde Q(x_j,z_j,1,\ldots,1).
\end{equation}
Indeed,
\begin{eqnarray*}
\langle Q,v\rangle
&=& \sum_{k=1}^{\infty}
c_{n,k}\eps_{n,k}
Q(w_{n,k})=  \sum_{k=1}^{\infty}
c_{n,k}\eps_{n,k}
\tilde Q(w_{n,k})\\
&=& \int_K \eps \tilde Q (x,y,\tau_1,\ldots, \tau_m)d \mu_n(\eps, x,y,\tau_1,\ldots, \tau_m).
\end{eqnarray*}
Taking the limit as $n\to\infty$, we obtain
\begin{eqnarray*}
\langle Q,v\rangle &=& \int_K \eps \tilde Q (x,y,\tau_1,\ldots, \tau_m)d \mu(\eps, x,y,\tau_1,\ldots, \tau_m) \\
&=& \sum_{j,\eps,s,\tau}
m_{j,\eps,s,\tau}
\eps\tilde Q(sx_j,
\eps s\tau_1\cdots\tau_m z_j,
\tau_1,\ldots,\tau_m)\\
&=& \sum_{j,\eps,s,\tau}
\eps^{q+1}s^{r+q}
\prod_{\ell=1}^m
\tau_\ell^{q+d_\ell}
m_{j,\eps,s,\tau}
 \tilde Q(x_j,
 z_j,
1,\ldots,1)\\
&=& \sum_{j=1}^3\left(\sum_{\eps,s,\tau}
\eps^{q+1}s^{r+q}
\prod_{\ell=1}^m
\tau_\ell^{q+d_\ell}
m_{j,\eps,s,\tau}
 \right)\tilde Q(x_j,
 z_j,
1,\ldots,1)\
\end{eqnarray*}
Thus, to see that $P_\beta(v)=0$, it is enough to prove that $\delta_j^\beta=0$ for $j=1,2,3.$

Fix now
$
\alpha=(a,b,c_1,\ldots,c_m)$ with $
|\alpha|=N$ and $
\alpha\neq(1,\ldots,1).
$
If $b\geq2$, put $\alpha'=\alpha$. Otherwise, at least one of
$a,c_1,\ldots,c_m$ is greater than or equal to $2$. Subtract $2$ from
one such exponent and add $2$ to $b$, obtaining
$
\alpha'=(a',b',c'_1,\ldots,c'_m)
$
with
$
b'\geq2.
$
Since only even changes have been made,
$
\delta_j^\alpha
=
\delta_j^{\alpha'}.
$ The key idea in this part is that degree can be transferred from one of the other indices to the $Y$-variable, increasing $b$ until it is at least $2$.

Put
$
q=b'\geq2.
$
If $a'>0$, choose $x^*\in X^*$ such that
$
x^*(x_j)\neq0
$ for $
j=1,2,3.
$
For every finite-coordinate polynomial $R\in\mathcal P(^qY)$, set
\begin{equation*}
Q_R(x,y,t_1,\ldots,t_m)
=
x^*(x)^{a'}R(y)
\prod_{\ell=1}^m t_\ell^{c'_\ell},
\end{equation*}
where the factor involving $x^*$ is omitted when $a'=0$.
By \eqref{eq:measure-component},
\begin{equation*}
\langle Q_R,v\rangle
=
\sum_{j=1}^3
\gamma_j\widetilde R(z_j),
\end{equation*}
where
$
\gamma_j
=
\delta_j^{\alpha'}x^*(x_j)^{a'},
$
with the convention $x^*(x_j)^0=1$. For $k\geq3$, let us take $R_k=(e_k^*)^q$, $R_{1,k}=e_1^*(e_k^*)^{q-1}$ and $R_{2,k}=e_2^*(e_k^*)^{q-1}$. These sequences are uniformly bounded and converge pointwise to zero
on $Y=c_0$. Hence, $\langle Q_{R_k},v\rangle\rightarrow0$, $\langle Q_{R_{1,k}},v\rangle\rightarrow0$ and $\langle Q_{R_{2,k}},v\rangle\rightarrow0$. Exactly as in Lemma~\ref{symmetric1}, we obtain $\gamma_1+\gamma_2+\gamma_3=0$, $\gamma_1-\frac12\gamma_2-\frac12\gamma_3=0$ and $-\frac12\gamma_1+\gamma_2-\frac12\gamma_3=0$. 
Thus, $\gamma_1=\gamma_2=\gamma_3=0$ and consequently $\delta_j^\alpha
=
\delta_j^{\alpha'}
=
0$ for $j=1,2,3$. By \eqref{eq:measure-component}, this implies that $
P_\alpha(v)=0,
$ as we wanted to see.
\end{proof}

\begin{proof}[Proof of Theorem \ref{theorem:higher-order-main}.(b)] By Lemma \ref{lemma:higher-fiber}, we have that $\Psi_N$ exposes $u_N$. This means that $u_N \in \Exp(B_{\polten{N}{F_N}})$. Finally, as in the proof of Theorem \ref{theorem:symmetric-main}, let us suppose that $u_N = \pm w^{\otimes N}$ for some $w = (x,y,t_1,\ldots,t_m) \in F_N$. Since $u_N \in M_N(\polten{N}{F_N})$, we have that $u_N = M_N(u_N) = \pm M_N(w^{\otimes N}) = \pm t_1\cdots t_m J_N(x \otimes y)$. Since $J_N$ is injective and $u_N = J_N(u)$, it follows that $u = \pm t_1\cdots t_mx \otimes y$. This contradiction yields the result and we are done.
\end{proof}

\noindent
\textbf{Funding.} S. Dantas has been supported by the grants PID2021-122126NB-C31 and PID2021-122126NB-C33 funded by MICIU/AEI/ 10.13039/ 501100011033 and by ERDF/EU. M. Mazzitelli has been partially supported by CONICET PIP  1122020 0102366CO, SIIP 80020240100196UN, UNCOMA PIN I 04/B251.  J. T. Rodríguez has been partially supported by CONICET PIP 11220200101609CO. 

\noindent
\textbf{Use of generative AI.}
The original ideas and constructions developed in this paper are due to the authors. OpenAI's ChatGPT was used during the development of the work to assist with some technical aspects of the arguments in both the projective and symmetric cases. In particular, it was especially useful in obtaining the first versions of the proofs of Lemmas \ref{lemma1} and \ref{symmetric1}. These proofs were subsequently carefully studied, revised, digested and substantially simplified by the authors. It also suggested the norm on $X$ used in the final version of the construction; in an earlier draft, the authors worked with a Hilbertian norm and a different choice of the vectors $x_1,x_2,x_3$, which led to less transparent computations and obscured some of the underlying ideas. The authors take full responsibility for the mathematical content of the paper.

\end{document}